\documentclass[pdflatex,sn-mathphys-num]{sn-jnl}

\usepackage{graphicx}%
\usepackage{multirow}%
\usepackage{amsmath,amssymb,amsfonts}%
\usepackage{amsthm}%
\usepackage{mathrsfs}%
\usepackage[title]{appendix}%
\usepackage{xcolor}%
\usepackage{textcomp}%
\usepackage{manyfoot}%
\usepackage{booktabs}%
\usepackage{algorithm}%
\usepackage{algorithmicx}%
\usepackage{algpseudocode}%
\usepackage{listings}%
\usepackage{tikz}
\usetikzlibrary{matrix,positioning}
\usetikzlibrary{calc}

\theoremstyle{thmstyleone}%
\newtheorem{theorem}{Theorem}
\newtheorem{proposition}[theorem]{Proposition}%
\newtheorem{lemma}[theorem]{Lemma}

\theoremstyle{thmstyletwo}%
\newtheorem{example}{Example}%
\newtheorem{remark}{Remark}%
\newtheorem{corollary}[theorem]{Corollary}
\theoremstyle{thmstylethree}%
\newtheorem{definition}{Definition}%

\begin{document}

\title{$F$-Contraction with an Auxiliary Function and Its Application to Terrain-Following Airplane Navigation}


\author*[1,2]{\fnm{Irom} \sur{Shashikanta Singh}}\email{shashikanta.phd.math@manipuruniv.ac.in}

\author[2,1]{\fnm{Yumnam} \sur{Mahendra Singh}}\email{ymahemit@rediffmail.com}
\equalcont{These authors contributed equally to this work.}

\affil*[1]{\orgdiv{Department of mathematics}, \orgname{Manipur University}, \orgaddress{\street{Canchipur}, \city{Imphal west}, \postcode{795001}, \state{Manipur}, \country{India}}}

\affil[2]{\orgdiv{Department of basic science and humanities}, \orgname{Manipur Institute of Technology, A constituent college of Manipur university},  \orgaddress{\street{Canchipur}, \city{Imphal west}, \postcode{795001}, \state{Manipur}, \country{India}}}


\abstract{This paper aims to integrate the concepts of \( F \)-contraction and \textit{\( S^B \)-contraction} within the context of super metric spaces. Specifically, we introduce the concepts of \textit{\( S^F \)-contraction} and \textit{Bianchini \( S^F \)-contraction}. We demonstrate that these new concepts are genuine generalizations of \textit{\( S^B \)- and \( S^K \)-contractions} by providing nontrivial examples. Furthermore, we establish the existence and uniqueness of fixed points for mappings that satisfy these contractions. Lastly, we apply our findings to a model describing an airplane capable of automatically following a terrain.}

\keywords{$F$-contraction, super metric space, fixed point, Bianchini contraction}


\pacs[MSC Classification]{Primary 47H10; Secondary 54H25}

\maketitle

\section{Introduction and Preliminaries}

Metric fixed point theory is one of the most active and influential branches of nonlinear functional analysis. 
Following the celebrated \textit{Banach contraction principle} \cite{banach}, numerous results and developments have appeared in the literature over the past decades. 
In general, progress in metric fixed point theory can be viewed from two main perspectives. 
The first approach focuses on weakening or modifying the contractive conditions imposed on mappings, while the second approach studies extensions of the underlying space. 
In many cases, further developments can be regarded as attempts to overcome the limitations of previously known results.

The \textit{Banach contraction principle} guarantees the existence and uniqueness of a fixed point for a self-mapping on a complete metric space. 
More precisely, let $(W,\eth)$ be a complete metric space and let $\Upsilon  : W \to W$ be a mapping satisfying
\begin{equation}\label{banach}
\eth(\Upsilon \xi,\Upsilon \zeta) \le \beta\, \eth(\xi,\zeta),
\end{equation}
for all $\xi,\zeta \in W$, where $\beta \in [0,1)$. 
Then, $\Upsilon $ admits a unique fixed point in $W$. 
However, one limitation of this principle is that any mapping $\Upsilon $ satisfying \eqref{banach} is necessarily continuous on $W$.

In 1968, Kannan \cite{kannan} addressed this limitation by introducing another important class of contractive mappings that differs from the \textit{Banach contraction}. 
This type of contraction does not require the continuity of the mapping and has attracted considerable attention in fixed point theory.

A mapping $\Upsilon  : W \to W$ is called a \emph{Kannan contraction} if, for all $\xi,\zeta \in W$,
\begin{equation}\label{kannan}
\eth(\Upsilon \xi,\Upsilon \zeta) \le \frac{\beta}{2} \big(\eth(\xi,\Upsilon \xi ) + \eth(\zeta ,\Upsilon \zeta)\big),
\end{equation}
where $\beta \in [0,1)$.

The integration of these two entirely independent contractions—\textit{Banach} and \textit{Kannan}—was a significant question that required attention. Exploring this potential connection is crucial for advancing our understanding in the field. S. Reich \cite{reich} addressed this issue by introducing a new type of contraction that generalizes both the \textit{Banach} and \textit{Kannan contraction}.
\begin{theorem}\cite{reich}
Let $(W, \eth)$ be a complete metric space, and $\Upsilon $ be a  mapping from $W$ into itself, satisfying the following condition,
\begin{equation}
\eth(\Upsilon \xi, \Upsilon \zeta) \leq a \eth(\xi, \Upsilon \xi ) + b \eth(\zeta , \Upsilon \zeta) + c \eth(\xi, \zeta), \text{ for all }  \xi,\zeta \in W.
\end{equation}
where $a$, $b$, and $c$ are non-negative numbers satisfying $a + b + c < 1$.
Then, $\Upsilon $ has a unique fixed point.		
\end{theorem}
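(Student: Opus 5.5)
We will run the standard Picard iteration argument. Fix an arbitrary $\xi_0 \in W$ and define $\xi_{n+1} = \Upsilon \xi_n$ for $n \ge 0$. If $\xi_{n} = \xi_{n+1}$ for some $n$, then $\xi_n$ is already a fixed point, so we may assume consecutive terms are distinct.

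The first step is to show that consecutive distances decay geometrically. Apply the contractive condition with $\xi = \xi_{n-1}$ and $\zeta = \xi_n$. Since $\eth(\xi_{n-1},\Upsilon\xi_{n-1}) = \eth(\xi_{n-1},\xi_n)$ and $\eth(\xi_n,\Upsilon \xi_n) = \eth(\xi_n,\xi_{n+1})$, this gives
\begin{equation*}
\eth(\xi_n,\xi_{n+1}) \le (a+c)\,\eth(\xi_{n-1},\xi_n) + b\,\eth(\xi_n,\xi_{n+1}).
\end{equation*}
Note that $b<1$ because $a+b+c<1$. Hence
\begin{equation*}
\eth(\xi_n,\xi_{n+1}) \le k\,\eth(\xi_{n-1},\xi_n), \qquad k := \frac{a+c}{1-b}.
\end{equation*}
Moreover $k \in [0,1)$, since $a+c < 1-b$. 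Iterating yields $\eth(\xi_n,\xi_{n+1}) \le k^n \eth(\xi_0,\xi_1)$.

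The second step is to show that $\{\xi_n\}$ is Cauchy. For $m>n$, the triangle inequality and the geometric series bound give
\begin{equation*}
\eth(\xi_n,\xi_m) \le \frac{k^n}{1-k}\,\eth(\xi_0,\xi_1) \to 0 \quad (n \to \infty).
\end{equation*}
Completeness of $(W,\eth)$ then provides $\xi^* \in W$ with $\xi_n \to \xi^*$.

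The third step is to show that $\xi^*$ is a fixed point. This is the only delicate point, because $\Upsilon$ need not be continuous (the Kannan part is allowed). We avoid continuity by combining the triangle inequality with the contractive condition for the pair $(\xi_n,\xi^*)$:
\begin{equation*}
\eth(\xi^*,\Upsilon\xi^*) \le \eth(\xi^*,\xi_{n+1}) + a\,\eth(\xi_n,\xi_{n+1}) + b\,\eth(\xi^*,\Upsilon\xi^*) + c\,\eth(\xi_n,\xi^*).
\end{equation*}
Letting $n\to\infty$ gives $(1-b)\,\eth(\xi^*,\Upsilon\xi^*) \le 0$. Since $b<1$, this forces $\Upsilon\xi^* = \xi^*$.

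The final step is uniqueness. Suppose $\xi^*$ and $\zeta^*$ are both fixed points. The terms $\eth(\xi^*,\Upsilon\xi^*)$ and $\eth(\zeta^*,\Upsilon\zeta^*)$ vanish, so the condition reduces to $\eth(\xi^*,\zeta^*) \le c\,\eth(\xi^*,\zeta^*)$. Since $c<1$, this gives $\xi^* = \zeta^*$.

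The only genuine obstacle is the third step, which needs an argument that does not use continuity; the estimate above handles it, using only that $b<1$.
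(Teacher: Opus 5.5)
The paper gives no proof of this statement: it is Reich's theorem, quoted in the introduction only as background from the literature, so there is nothing in the paper to compare your argument against.

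Your proof is the standard one, and it is correct and complete.
\begin{itemize}
\item The inequality $(1-b)\,\eth(\xi_n,\xi_{n+1})\le (a+c)\,\eth(\xi_{n-1},\xi_n)$ gives geometric decay with ratio $k=(a+c)/(1-b)<1$.
\item The geometric-series bound then yields the Cauchy property.
\item The fixed-point step correctly avoids any continuity assumption. You apply the contractive condition to the pair $(\xi_n,\xi^*)$ and absorb $b\,\eth(\xi^*,\Upsilon\xi^*)$ on the left, using $b<1$.
\item Uniqueness follows from $c<1$.
\end{itemize}
The preliminary reduction to $\xi_n\neq\xi_{n+1}$ is harmless but is not used anywhere.

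One contrast with the paper's own fixed-point theorems for $S^F$- and Bianchini $S^F$-contractions is worth noticing. There, the passage from $\Upsilon^n\xi_0\to\xi^*$ to $\Upsilon\xi^*=\xi^*$ is not derived from the contractive inequality. It is built into the hypotheses through the condition $\lim_{n}\eth(\Upsilon^n u,v)=0\Rightarrow\lim_{n}\eth(\Upsilon(\Upsilon^n u),\Upsilon v)=0$. Your third step shows why no such extra hypothesis is needed in Reich's setting. Reich's condition controls $\eth(\Upsilon\xi_n,\Upsilon\xi^*)$ directly, and a genuine triangle inequality is available. Neither holds directly for the auxiliary-function contractions on super-metric spaces.
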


In 1972, Bianchini \cite{bianchini} introduced a new generalization Kannan contraction inspired by the classical inequality $\frac{a+b}{2} \le \max \{a,b\}$.
\begin{theorem}\cite{bianchini}
    Let $(W, \eth)$ be a complete metric space, and $\Upsilon $ be a  mapping from $W$ into itself, satisfying the following condition,
\begin{equation}
\eth(\Upsilon \xi, \Upsilon \zeta) \leq \beta \max\{\eth(\xi,\Upsilon \xi ),\eth(\zeta ,\Upsilon \zeta)\}, \text{ for all }  \xi,\zeta \in W.
\end{equation}
where $\beta \in (0,1)$.
Then, $\Upsilon $ has a unique fixed point.	
\end{theorem}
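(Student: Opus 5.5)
We use the standard Picard iteration argument. Fix $\xi_0 \in W$ and set $\xi_{n+1} = \Upsilon \xi_n$, writing $d_n = \eth(\xi_n, \xi_{n+1})$. If some $d_n = 0$, then $\xi_n$ is already a fixed point, so assume $d_n > 0$ for all $n$.

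\textbf{Step 1: geometric decay of consecutive distances.} Apply the Bianchini condition with $\xi = \xi_{n-1}$ and $\zeta = \xi_n$:
\begin{equation*}
d_n = \eth(\Upsilon \xi_{n-1}, \Upsilon \xi_n) \le \beta \max\{d_{n-1}, d_n\}.
\end{equation*}
Suppose the maximum were $d_n$. Then $d_n \le \beta d_n$ with $\beta < 1$, which forces $d_n = 0$, a contradiction. Hence $d_n \le \beta d_{n-1}$, and by induction $d_n \le \beta^n d_0$.

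\textbf{Step 2: the sequence is Cauchy.} This step is where I would be most careful, because the condition only controls images of pairs and not $\eth(\xi_n, \xi_m)$ directly. The difficulty disappears once Step 1 is in hand: for $m > n$, the triangle inequality and the geometric series give
\begin{equation*}
\eth(\xi_n, \xi_m) \le \sum_{k=n}^{m-1} d_k \le \frac{\beta^n}{1-\beta}\, d_0 \to 0.
\end{equation*}
So $(\xi_n)$ is Cauchy, and by completeness $\xi_n \to \xi^* \in W$.

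\textbf{Step 3: $\xi^*$ is a fixed point.} Continuity of $\Upsilon$ is not available, so we estimate directly:
\begin{equation*}
\eth(\xi^*, \Upsilon \xi^*) \le \eth(\xi^*, \xi_{n+1}) + \eth(\Upsilon \xi_n, \Upsilon \xi^*) \le \eth(\xi^*, \xi_{n+1}) + \beta \max\{d_n, \eth(\xi^*, \Upsilon \xi^*)\}.
\end{equation*}
Let $n \to \infty$. Since $d_n \to 0$ and $\eth(\xi^*, \xi_{n+1}) \to 0$, we get
\begin{equation*}
\eth(\xi^*, \Upsilon \xi^*) \le \beta\, \eth(\xi^*, \Upsilon \xi^*).
\end{equation*}
To justify the limit of the maximum, note that for large $n$ either the maximum is $d_n$, which tends to $0$, or it equals $\eth(\xi^*, \Upsilon \xi^*)$; in both cases the inequality above follows. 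Because $\beta < 1$, this gives $\Upsilon \xi^* = \xi^*$.

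\textbf{Step 4: uniqueness.} If $\Upsilon p = p$ and $\Upsilon q = q$, then
\begin{equation*}
\eth(p, q) \le \beta \max\{\eth(p, p), \eth(q, q)\} = 0,
\end{equation*}
so $p = q$.
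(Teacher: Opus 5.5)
Your proof is correct and complete, and it takes a genuinely different route from the paper. The paper gives no direct proof of this theorem. It recovers it as a corollary at the end of Section~3 by substituting $S(\xi,\zeta)=\xi$ and $F(t)=\ln t$ (so $\beta=e^{-\omega}$) into Corollary~\ref{bsfcol2}. That means running its general machinery:
Lemma~\ref{asym lemma2} gives asymptotic regularity; Lemma~\ref{convergent lemma} turns asymptotic regularity into convergence of the orbit; continuity of $\Upsilon$ identifies the limit as a fixed point; and $S(\xi,\xi)=\xi$ gives uniqueness.

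Your argument departs from this at the two places where the real work lies.

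In Step~2 you get the Cauchy property from the explicit bound $d_n\le\beta^n d_0$ together with the triangle inequality, not from $d_n\to 0$ alone. That is the sturdier route. The condition $d_n\to 0$ by itself only controls $\eth(\xi_n,\xi_{n+k})$ for each fixed $k$, not uniformly in $k$.

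In Step~3 you show the limit is a fixed point using only the contractive inequality, with no continuity. This matters because Bianchini maps (Kannan maps, for instance) need not be continuous. The paper's derivation through Corollary~\ref{bsfcol2} therefore literally recovers the theorem only for continuous $\Upsilon$. Your Step~3 is precisely the argument showing that the orbital hypothesis of Theorem~\ref{bsf theorem} holds automatically for Bianchini contractions, since every limit of an orbit is a fixed point.

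The paper's route buys generality: super-metric spaces, arbitrary $F\in\mathcal{F}$, and an auxiliary map $S$, settings where neither geometric decay nor the ordinary triangle inequality is available. Yours buys a self-contained proof of the metric-space statement exactly as stated, with no extra hypotheses.
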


Furthermore, Wardowski \cite{wardowski} introduced a special class of contractions known as \emph{$F$-contractions}. 
This concept is based on a real-valued function $F : (0,\infty) \to \mathbb{R}$ satisfying the following conditions:
\begin{enumerate}
    \item[(W1)] The function $F$ is monotone increasing in a strict sense; namely, for any $\lambda,\beta>0$, if $\lambda<\beta$, then $F(\lambda)<F(\beta)$.

    \item[(W2)] Let $\{\lambda_n\}$ be a sequence of positive real numbers. Then
    \[
    \lambda_n \to 0 \text{ as } n\to\infty 
    \quad \text{if and only if} \quad
    F(\lambda_n) \to -\infty \text{ as } n\to\infty.
    \]

    \item[(W3)] There exists a number $k\in(0,1)$ for which
    \[
    \lim_{\lambda\to 0^{+}} \lambda^{k}F(\lambda)=0.
    \]
\end{enumerate}

The set of all such functions is denoted by $\mathcal{F}$.

Let $(W,\eth)$ be a metric space. A mapping $\Upsilon  : W \to W$ is called an 
\emph{$F$-contraction} if there exists $\omega   > 0$ such that
\[
\eth(\Upsilon \xi,\Upsilon \zeta) > 0
\;\Rightarrow\;
\omega   + F\big(\eth(\Upsilon \xi,\Upsilon \zeta)\big) \le F\big(\eth(\xi,\zeta)\big),
\]
for all $\xi,\zeta \in W$.

The following are the definitions of the \textit{Picard operator} and asymptotically regularity.
\begin{definition}\cite{picard}
Let $(W,\eth)$ be a metric space and $\Upsilon : W\to W$ be a self-mapping. Then, $\Upsilon $ is called the \textit{Picard operator} if $\Upsilon $ has a unique fixed point $u \in W$ and for any $\xi \in W$, the sequence of iterates $\{\Upsilon^n\xi\}$ converges to $u$.	
\end{definition}
\begin{definition} \cite{asym}
Let $(W,\eth)$ be a metric space. The mapping
$\Upsilon $ is said to be asymptotically regular at $\xi_0 \in W$ if
$\lim_{n \to \infty} \eth(\Upsilon^n{\xi_0}, \Upsilon^{n+1}{\xi_0}) = 0$
and  $\Upsilon :W\rightarrow W$ is said to be asymptotically regular  in $W$ if $\Upsilon $ is asymptotically regular at each $\xi \in W$.
\end{definition}

In 2020, Batra et al. \cite{batra} introduced the concept of \textit{Kannan $F$-contraction} and proved the existence and uniqueness of a fixed point in complete metric spaces.

\begin{definition}
Let $(W,\eth)$ be a metric space and $F \in \mathcal{F}$. 
A mapping $\Upsilon :W\to W$ is said to be a \emph{\textit{Kannan $F$-contraction}} if the following conditions hold:
\begin{enumerate}
    \item[(i)] $\Upsilon \xi \neq \Upsilon \zeta \implies \Upsilon \xi  \neq \xi \text{ or } \Upsilon \zeta \neq \zeta$;
    \item[(ii)] 
    \[
    \omega   + F\left(\eth(\Upsilon \xi,\Upsilon \zeta)\right)
    \le 
    F\left(\frac{\eth(\xi,\Upsilon \xi )+\eth(\zeta ,\Upsilon \zeta)}{2}\right),
    \]
\end{enumerate}
for all $\xi,\zeta \in W$ with $\Upsilon \xi \neq \Upsilon \zeta$.
\end{definition}

\begin{theorem}\cite{batra}
Let $(W,\eth)$ be a complete metric space. 
If $\Upsilon :W\to W$ is a \textit{Kannan $F$-contraction}, then $\Upsilon $ is a \textit{Picard operator}.
\end{theorem}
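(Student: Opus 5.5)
The plan is to run the standard Picard-iteration argument and use the Kannan $F$-contraction inequality in two ways: to shrink consecutive distances, and then to control distances between arbitrary iterates. Fix $\xi_0\in W$ and set $\xi_n=\Upsilon^n\xi_0$, $d_n=\eth(\xi_n,\xi_{n+1})$. If $\xi_{n}=\xi_{n+1}$ for some $n$, then $\xi_n$ is a fixed point and the iterates are eventually constant. So I assume $d_n>0$ for all $n$.

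\textbf{Step 1: $d_n\to 0$.} Apply (ii) with $\xi=\xi_{n-1}$, $\zeta=\xi_n$; this is allowed since $\Upsilon\xi_{n-1}\ne\Upsilon\xi_n$. It gives
$$\omega+F(d_n)\le F\Big(\tfrac{d_{n-1}+d_n}{2}\Big).$$
By (W1) this forces $d_n<\frac{d_{n-1}+d_n}{2}$, i.e. $d_n<d_{n-1}$. Hence $\frac{d_{n-1}+d_n}{2}<d_{n-1}$, and again by (W1) $F(d_n)\le F(d_{n-1})-\omega$. Iterating, $F(d_n)\le F(d_0)-n\omega\to-\infty$, so $d_n\to0$ by (W2). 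This already shows asymptotic regularity.

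\textbf{Step 2: $\{\xi_n\}$ is Cauchy.} This is where the Kannan form helps: I do not need (W3) or a summability estimate. For $m>n$ with $\xi_m\ne\xi_n$, apply (ii) to $\xi_{n-1},\xi_{m-1}$. Dropping $\omega>0$ and using (W1),
$$\eth(\xi_n,\xi_m)<\tfrac{d_{n-1}+d_{m-1}}{2}\le d_{n-1}.$$
Since $d_{n-1}\to0$, the sequence is Cauchy. By completeness, $\xi_n\to u$ for some $u\in W$.

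\textbf{Step 3: $u$ is a fixed point.} Continuity of $\Upsilon$ is not available, so I argue by contradiction and expect this to be the step needing the most care. Suppose $\Upsilon u\ne u$. If $\xi_{n+1}=\Upsilon\xi_n\ne\Upsilon u$, then (ii) with $\xi=\xi_n$, $\zeta=u$ gives
$$\eth(\xi_{n+1},\Upsilon u)<\tfrac{d_n+\eth(u,\Upsilon u)}{2}.$$
This inequality holds trivially when $\xi_{n+1}=\Upsilon u$. Letting $n\to\infty$ yields $\eth(u,\Upsilon u)\le\frac12\eth(u,\Upsilon u)$, so $\eth(u,\Upsilon u)=0$, a contradiction. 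Hence $\Upsilon u=u$.

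\textbf{Step 4: uniqueness.} If $v$ is another fixed point with $v\ne u$, then $\Upsilon u=u\ne v=\Upsilon v$. Condition (i) would then require $\Upsilon u\ne u$ or $\Upsilon v\ne v$, which is false. So $u=v$. (The same conclusion follows from (ii), whose right-hand side would be $F(0)$, undefined.)

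Since $\xi_0$ was arbitrary and every iterate sequence converges to the unique fixed point $u$, $\Upsilon$ is a Picard operator.
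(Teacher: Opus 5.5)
Your proof is correct, and it takes a genuinely different route from the paper. The paper never proves this cited result directly. It only recovers it, in the weaker form ``$\Upsilon$ has a unique fixed point,'' in the last corollary of Section~3, by putting $S(\xi,\zeta)=\xi$ into Corollary~\ref{bsfcol11}; that is, by specializing its Bianchini $S^F$ machinery. That machinery consists of asymptotic regularity (Lemma~\ref{asym lemma2}, which parallels your Step~1), then Lemma~\ref{convergent lemma} for convergence of the orbit, then the extra hypothesis $\lim_n\eth(\Upsilon^n u,v)=0\Rightarrow\lim_n\eth(\Upsilon(\Upsilon^n u),\Upsilon v)=0$ to show the limit is fixed.

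Your Steps~2--4 differ from this at each point, and in the metric setting they are the better argument. In Step~2 you feed the pair $(\xi_{n-1},\xi_{m-1})$ back into the contraction and get $\eth(\xi_n,\xi_m)<d_{n-1}$ uniformly in $m>n$, which is exactly what the Cauchy property requires. The paper's induction on the gap only gives $\eth(\xi_n,\xi_{n+k})\to0$ for each fixed $k$, and asymptotic regularity alone does not force convergence (the shift on the partial sums of the harmonic series is a counterexample). In Step~3 you use the contraction plus continuity of $\eth$. So you need no orbital-continuity hypothesis (the paper's route needs one, and its corollary silently drops it), and you obtain the full Picard property. The price is that continuity of $\eth$ comes from the triangle inequality, so your argument does not extend to super-metric spaces, which is where the paper's generality lies. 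In Step~4 you correctly use condition (i). Drop the parenthetical claim that (ii) gives the same conclusion via $F(0)$: $F$ is defined only on $(0,\infty)$, and (i) is precisely what keeps the argument of $F$ positive. The uniqueness part of the paper's Theorem~\ref{bsf theorem} makes exactly this $F(0)$ slip.
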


Recently, Bessem \cite{bessem} generalized the \textit{Banach contraction principle} by using an auxiliary function $S:W \times W \to W$.

\begin{definition}\label{sb definition}\cite{bessem}
Let $(W,\eth)$ be a metric space and $S:W\times W \to W$ be a given mapping. 
A mapping $\Upsilon : W \to W$ is said to be an $S^B$-contraction if there exists $\beta \in (0,1)$ such that
\begin{equation}\label{sb contraction}
\eth(\Upsilon \xi,S(\Upsilon \xi,\Upsilon \zeta)) + \eth(\Upsilon \zeta,S(\Upsilon \xi,\Upsilon \zeta)) 
\le 
\beta \big(\eth(\xi,S(\xi,\zeta)) + \eth(\zeta ,S(\xi,\zeta))\big)
\end{equation}
for all $\xi,\zeta \in W$.
\end{definition}

If $S(\xi,\zeta)=\xi$, then the \textit{$S^B$-contraction} reduces to the \textit{Banach contraction}.

\begin{theorem}\cite{bessem}
Let $(W,\eth)$ be a complete metric space and $S:W\times W \to W$ be a given mapping. 
If $\Upsilon  : W \to W$ is an \textit{$S^B$-contraction}, then $\Upsilon $ is a \textit{Picard operator} provided that, for all $u,v \in W$,
\[
\lim_{n \to \infty} \eth(\Upsilon^n u, v) = 0 
\quad \Longrightarrow \quad 
\lim_{n \to \infty} \eth(\Upsilon(\Upsilon^n u), \Upsilon  v) = 0.
\]
\end{theorem}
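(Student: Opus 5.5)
Fix $\xi_0\in W$ and set $\xi_n=\Upsilon^n\xi_0$. Write $D(\xi,\zeta)=\eth(\xi,S(\xi,\zeta))+\eth(\zeta,S(\xi,\zeta))$, so that \eqref{sb contraction} reads $D(\Upsilon\xi,\Upsilon\zeta)\le\beta D(\xi,\zeta)$. The triangle inequality gives $\eth(\xi,\zeta)\le D(\xi,\zeta)$ for all $\xi,\zeta$, and $D(\xi,\xi)=2\eth(\xi,S(\xi,\xi))$. The plan is to control $\eth(\xi_n,\xi_{n+k})$ through $D$, show the orbit is Cauchy, and then use the continuity-type hypothesis to identify the limit as a fixed point.

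\emph{Cauchy property.} Iterating the contraction gives, for all $n,k\ge 0$,
\[
\eth(\xi_n,\xi_{n+k})\le D(\xi_n,\xi_{n+k})\le \beta^n D(\xi_0,\xi_k).
\]
So the main task is to bound $D(\xi_0,\xi_k)$ uniformly in $k$. This is the step I expect to be the main obstacle, because $S$ is arbitrary and $D$ need not satisfy a triangle inequality. I would first try to reduce to consecutive terms. Choose the point $S(\xi_0,\xi_k)$ and write
\[
D(\xi_0,\xi_k)\le 2\eth(\xi_0,S(\xi_0,\xi_k))+\eth(\xi_0,\xi_k).
\]
The term $\eth(\xi_0,\xi_k)$ is bounded by $\sum_{j<k}\eth(\xi_j,\xi_{j+1})\le\sum_j\beta^jD(\xi_0,\xi_1)=D(\xi_0,\xi_1)/(1-\beta)$, since $\eth(\xi_j,\xi_{j+1})\le D(\xi_j,\xi_{j+1})\le\beta^jD(\xi_0,\xi_1)$. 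Hence $\sum_n\eth(\xi_n,\xi_{n+1})<\infty$. Notice that this consecutive-term estimate alone already gives the Cauchy property: $\eth(\xi_n,\xi_m)\le\sum_{j\ge n}\beta^jD(\xi_0,\xi_1)\to0$. The uniform bound on $D(\xi_0,\xi_k)$ is therefore unnecessary, and I would use the geometric-series argument directly. By completeness, $\xi_n\to u$ for some $u\in W$.

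\emph{Fixed point.} Apply the hypothesis with $v=u$. Since $\eth(\Upsilon^n\xi_0,u)\to0$, we get $\eth(\xi_{n+1},\Upsilon u)\to0$. As $\xi_{n+1}\to u$ as well, uniqueness of limits gives $\Upsilon u=u$.

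\emph{Uniqueness.} Suppose $u$ and $w$ are both fixed points. Then $D(u,w)=D(\Upsilon u,\Upsilon w)\le\beta D(u,w)$, which forces $D(u,w)=0$, and so $\eth(u,w)\le D(u,w)=0$. This also shows the limit $u$ does not depend on $\xi_0$, so every orbit converges to the unique fixed point and $\Upsilon$ is a Picard operator.
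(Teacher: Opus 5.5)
Your proof is correct, but it takes a different route from the paper. The paper gives no direct proof of this statement. It recovers it as a corollary of Theorem~\ref{sf theorem}, taking $F(\xi)=\ln\xi$ (so $\omega=-\ln\beta$) and regarding the metric space as a super-metric space. That theorem works in two stages:
\begin{enumerate}
\item Lemma~\ref{asym lemma} iterates $F(\lambda_n+\eta_n)\le F(\lambda_0+\eta_0)-n\omega$. This gives $D(\xi_{n+1},\xi_n)\to 0$, and hence only the qualitative conclusion $\eth(\xi_n,\xi_{n+1})\to 0$ (asymptotic regularity).
\item Lemma~\ref{convergent lemma} is then invoked to pass from asymptotic regularity to convergence of the orbit.
\end{enumerate}
The paper's fixed-point step (via the continuity-type hypothesis and uniqueness of limits) and its uniqueness step ($\omega+F(D(u,v))\le F(D(u,v))$) are essentially the same as yours.

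The real difference is the Cauchy step. You keep the quantitative rate $\eth(\xi_j,\xi_{j+1})\le D(\xi_j,\xi_{j+1})\le\beta^j D(\xi_0,\xi_1)$ and sum it with the genuine triangle inequality, which settles the Cauchy property in one line. The paper's route throws this rate away. The induction in Lemma~\ref{convergent lemma} only yields $\eth(\xi_n,\xi_{n+k})\to 0$ for each fixed $k$, not uniformly in $k$. Moreover, asymptotic regularity alone does not force a Cauchy orbit: on $[0,\infty)$ the map $x\mapsto x+\frac{1}{1+x}$ is asymptotically regular, yet every orbit diverges.

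So the paper buys formal generality (super-metric spaces, arbitrary $F\in\mathcal F$) at the cost of an unjustified passage from asymptotic regularity to convergence. Your argument is tied to the metric setting and the linear factor $\beta$, but it is complete. Extending your approach to general $F$ would require a summable rate, such as Wardowski extracts from (W3).

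Your exploratory attempt to bound $D(\xi_0,\xi_k)$ uniformly in $k$ is unnecessary, as you noticed, and can simply be deleted.
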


\begin{definition}\label{sk definition}\cite{bessem}
Let $(W,\eth)$ be a metric space and $S:W\times W \to W$ be a given mapping. 
A mapping $\Upsilon : W \to W$ is said to be an \textit{$S^K$-contraction} if there exists $\beta \in (0,1)$ such that
\begin{equation}\label{sk contraction}
\begin{split}
\eth(\Upsilon \xi,S(\Upsilon \xi,\Upsilon \zeta)) + \eth(\Upsilon \zeta,S(\Upsilon \xi,\Upsilon \zeta)) \le 
&\frac{\beta}{2} \Big(\eth(\xi,S(\xi,\Upsilon \xi )) + \eth(\Upsilon \xi,S(\xi,\Upsilon \xi )) \\
&+ \eth(\zeta ,S(\zeta,\Upsilon \zeta)) + \eth(\Upsilon \zeta,S(\zeta,\Upsilon \zeta))\Big),
\end{split}
\end{equation}
for all $\xi,\zeta \in W$.
\end{definition}

If $S(\xi,\zeta)=\xi$, then the \textit{$S^K$-contraction} reduces to the \textit{Kannan contraction}.

\begin{theorem}\cite{bessem}
Let $(W,\eth)$ be a complete metric space and $S:W\times W \to W$ be a given mapping with $S(\xi,\xi)=\xi$. 
If $\Upsilon  : W \to W$ is an \textit{$S^K$-contraction}, then $\Upsilon $ is a \textit{Picard operator} provided that, for all $u,v \in W$,
\[
\lim_{n \to \infty} \eth(\Upsilon^n u, v) = 0 
\quad \Longrightarrow \quad 
\lim_{n \to \infty} \eth(\Upsilon(\Upsilon^n u), \Upsilon  v) = 0.
\]
\end{theorem}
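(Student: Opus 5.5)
The plan is to follow the Picard-iteration scheme, controlling everything through the auxiliary quantity
\[
D(\xi,\zeta):=\eth(\xi,S(\xi,\zeta))+\eth(\zeta,S(\xi,\zeta)).
\]
By the triangle inequality, $\eth(\xi,\zeta)\le D(\xi,\zeta)$. Since $S(\xi,\xi)=\xi$, we also have $D(\xi,\xi)=0$.

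Fix $\xi_0\in W$ and put $\xi_n=\Upsilon^n\xi_0$. Set $a_n:=D(\xi_n,\xi_{n+1})$, which dominates $\eth(\xi_n,\xi_{n+1})$. Apply the \textit{$S^K$-contraction} inequality \eqref{sk contraction} with $\xi=\xi_{n-1}$ and $\zeta=\xi_n$. The left-hand side is $D(\xi_n,\xi_{n+1})=a_n$. The right-hand side is $\frac{\beta}{2}\big(D(\xi_{n-1},\xi_n)+D(\xi_n,\xi_{n+1})\big)=\frac{\beta}{2}(a_{n-1}+a_n)$. Hence $a_n\le \frac{\beta}{2-\beta}a_{n-1}=:\gamma a_{n-1}$ with $\gamma\in(0,1)$. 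This gives $a_n\le\gamma^n a_0$ and $\eth(\xi_n,\xi_{n+1})\le \gamma^n a_0$, so $\Upsilon$ is asymptotically regular.

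For the Cauchy property I would avoid summing a geometric series, which is also possible, and use the contraction directly. Apply \eqref{sk contraction} to $\xi=\xi_{n},\zeta=\xi_m$:
\[
\eth(\xi_{n+1},\xi_{m+1})\le D(\xi_{n+1},\xi_{m+1})\le \tfrac{\beta}{2}(a_n+a_m)\le \tfrac{\beta}{2}(\gamma^n+\gamma^m)a_0\to0 .
\]
So $\{\xi_n\}$ is Cauchy and converges to some $u\in W$ by completeness.

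To see that $u$ is a fixed point, I would invoke the stated hypothesis with $v=u$. Since $\eth(\Upsilon^n\xi_0,u)\to0$, it gives $\eth(\xi_{n+1},\Upsilon u)\to0$. By uniqueness of limits in a metric space, $\Upsilon u=u$. This is the step where the extra hypothesis is essential: the $S^K$ condition alone gives no continuity, and one cannot simply bound $\eth(\Upsilon u,u)$ because $D(u,\Upsilon u)$ is not controlled by $\eth(u,\Upsilon u)$ from above. I expect this to be the only delicate point, and the hypothesis settles it.

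For uniqueness, suppose $u,w$ are fixed points. By \eqref{sk contraction},
\[
D(u,w)\le\tfrac{\beta}{2}\big(D(u,u)+D(w,w)\big)=0 ,
\]
using $S(\xi,\xi)=\xi$. Hence $\eth(u,w)\le D(u,w)=0$ and $u=w$. Since $\xi_0$ was arbitrary, every Picard sequence converges to the unique fixed point, so $\Upsilon$ is a \textit{Picard operator}.
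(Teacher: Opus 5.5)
Your proof is correct and complete, but it reaches the Cauchy property by a different route from the paper.

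\textbf{The paper's route.} The paper never proves this statement directly. It recovers it as Corollary \ref{bsfcol1} by putting $F=\ln$ into the Kannan $S^F$ corollary, which in turn is deduced from the Bianchini $S^F$ Theorem \ref{bsf theorem} via $\frac{a+b}{2}\le\max\{a,b\}$. Asymptotic regularity comes from the $F$-descent $F(\lambda_{n+1}+\eta_{n+1})\le F(\lambda_n+\eta_n)-\omega$ together with (W2) (Lemma \ref{asym lemma2}). Convergence of the orbit is then delegated to Lemma \ref{convergent lemma}, which claims that the Picard iterates of any asymptotically regular map on a complete space converge.

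\textbf{Your route.} You exploit the linear structure directly. You get geometric decay $a_n\le\gamma^n a_0$ with $\gamma=\beta/(2-\beta)$. You then apply the contraction to the pair $(\xi_n,\xi_m)$ to get $\eth(\xi_{n+1},\xi_{m+1})\le\frac{\beta}{2}(\gamma^n+\gamma^m)a_0$, a bound uniform in $m$. Letting $m\to\infty$ even gives the explicit rate $\eth(\xi_{n+1},u)\le\frac{\beta}{2}\gamma^n a_0$.

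\textbf{What each buys.} The paper's route buys generality: one scheme covers $S^F$-, Kannan- and Bianchini-type conditions. However, its Cauchy step is not valid as stated, because asymptotic regularity alone does not force convergence. For example, let $h_n=\sum_{k=1}^n 1/k$; the set $\{h_n\}$ is closed in $\mathbb{R}$, hence complete. The shift $h_n\mapsto h_{n+1}$ on it is asymptotically regular and has no fixed point, yet $h_n\to\infty$. Moreover, the induction in Lemma \ref{convergent lemma} only controls $\eth(\xi_n,\xi_{n+k})$ for each fixed $k$. Your direct two-point estimate supplies exactly the missing uniformity, which is why your argument stands on its own.

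The remaining steps coincide with the paper's. You obtain $\Upsilon u=u$ from the hypothesis with $v$ equal to the limit, plus uniqueness of limits. You obtain uniqueness of the fixed point from $S(\xi,\xi)=\xi$, which makes the right-hand side vanish.
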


Over time, metric spaces have been generalized in several directions. 
Some well-known generalizations include quasi-metric spaces, $b$-metric spaces, symmetric spaces, dislocated metric spaces, partial metric spaces, modular metric spaces, cone metric spaces, ultrametric spaces, and various hybrid structures derived from these concepts.

Recently, another interesting generalization known as a \emph{super-metric space} has been introduced. 
Every metric space satisfies the axioms of a super-metric space, although the triangle inequality is not necessarily required in this setting.

\begin{definition}\cite{super metric}\label{sms}
Let $W$ be a nonempty set and let $\eth : W \times W \to [0,\infty)$ be a function. 
Then $\eth$ is called a \emph{super-metric} on $W$ if the following conditions hold:
\begin{enumerate}
    \item  $\eth(\xi,\zeta)=0$ iff $\xi =\zeta $ for all $\xi,\zeta \in W$.
    \item $\eth(\xi,\zeta)=\eth(\zeta ,\xi)$ for all $\xi,\zeta \in W$.
    \item There exists a constant $s \ge 1$ such that for every $\zeta \in  W$, there exist two distinct sequences $(\xi_n)$ and $(\zeta_n)$ in $W$ satisfying
    \[
    \eth(\xi_n,\zeta_n) \to 0 \quad \text{as } n \to \infty,
    \]
    such that
    \[
    \limsup_{n\to\infty} \eth(\zeta_n,\zeta)
    \le
    s \, \limsup_{n\to\infty} \eth(\xi_n,\zeta).
    \]
\end{enumerate}
Then, $(W,\eth,s)$ is called a \emph{super-metric space}.
\end{definition}
Axiom 3 of Definition \ref{sms} is called super-metric triangle inequality.
\begin{example}\cite{super metric}
Let $W=[0,\infty)$ and define $\eth : W \times W \to [0,\infty)$ by
\[
\eth(\xi,\zeta)=
\begin{cases}
\dfrac{\xi+\zeta }{1+\xi+\zeta }, & \xi \neq \zeta,\; \xi\neq 0,\; \zeta \neq 0, \\[6pt]
0, & \xi =\zeta , \\[6pt]
\max\!\left\{\dfrac{\xi }{2},\,\dfrac{\zeta }{2}\right\}, & \text{otherwise}.
\end{cases}
\]
Then $(W,\eth,s)$ is a super-metric space.
\end{example}

\begin{definition}\cite{super metric1}
Let $(W,\eth,s)$ be a supermetric space and let $\{\xi_n\}$ be a sequence in $W$.
\begin{enumerate}
    \item[(i)] The sequence $\{\xi_n\}$ is said to converge to $\xi \in W$ 
    if and only if
    \[
    \lim_{n \to \infty} \eth(\xi_n,\xi) = 0.
    \]

    \item[(ii)] The sequence $\{\xi_n\}$ is said to be a Cauchy sequence in $W$ 
    if and only if
    \[
    \lim_{n \to \infty} \sup \{ \eth(\xi_n,\xi_p) : p > n \} = 0.
    \]
\end{enumerate}
\end{definition}

\begin{proposition}\cite{super metric1}
On a supermetric space $(W,\eth,s)$, the limit of a convergent sequence is unique.
\end{proposition}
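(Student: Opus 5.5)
We argue by contradiction, using only axioms (1), (2) and (3) of Definition \ref{sms}. Suppose $\{\xi_n\}$ converges to both $\xi$ and $\zeta$ in $W$, that is, $\eth(\xi_n,\xi)\to 0$ and $\eth(\xi_n,\zeta)\to 0$, and assume $\xi\neq\zeta$. We aim to show $\eth(\xi,\zeta)=0$, after which axiom (1) forces $\xi=\zeta$.

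The key tool is the super-metric triangle inequality in the form used in \cite{super metric}. If $\{a_n\}$ and $\{b_n\}$ satisfy $\eth(a_n,b_n)\to 0$, then for every point $z$
\[
\limsup_{n\to\infty}\eth(b_n,z)\le s\,\limsup_{n\to\infty}\eth(a_n,z).
\]
We apply this with $a_n=\xi_n$, $z=\zeta$, and $b_n=\xi$ constant. The hypothesis $\eth(a_n,b_n)=\eth(\xi_n,\xi)\to 0$ holds by assumption, and by symmetry (axiom (2)) this is the same as $\eth(\xi,\xi_n)\to 0$. The inequality then gives
\[
\eth(\xi,\zeta)=\limsup_{n\to\infty}\eth(\xi,\zeta)\le s\,\limsup_{n\to\infty}\eth(\xi_n,\zeta)=s\cdot 0=0.
\]
Hence $\eth(\xi,\zeta)=0$, so $\xi=\zeta$ by axiom (1), contradicting $\xi\neq\zeta$. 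The limit is therefore unique.

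The main obstacle is the literal wording of axiom (3) as printed. There it only asserts, for each point, the \emph{existence} of some pair of distinct sequences with the stated property, rather than giving an inequality valid for \emph{all} pairs of sequences with $\eth(a_n,b_n)\to 0$. The argument above requires the universal reading, which is the intended one in \cite{super metric}. I would therefore state explicitly that the axiom is used in its universal form, as in the original source, and then the two-line estimate completes the proof. The "distinctness" clause should also be addressed: if it were meant to force $a_n\ne b_n$ for every $n$, I would replace the constant sequence $\xi$ by the sequence $\{\xi_{n+1}\}$ compared against $\{\xi_n\}$. Even then, some version of the universal reading is still needed.
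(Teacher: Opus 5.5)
Your main argument is correct and is the natural one-line proof. The paper gives no proof of this proposition; it is imported from the cited reference. Applying axiom 3 of Definition \ref{sms}, read universally, to the pair $(\xi_n)$, $(\xi)$ with base point $\zeta$ gives $\eth(\xi,\zeta)\le s\limsup_{n}\eth(\xi_n,\zeta)=0$, exactly as you do.

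You are also right to insist on the universal reading. With the existential wording as printed, axiom 3 is satisfied trivially: take $x_n=y$ for all $n$, and $y_n=y$ for $n\ge 2$ with $y_1\neq y$. The proposition is then false. For instance, on $W=\{a,b,c_1,c_2,\dots\}$ with $\eth(c_k,a)=\eth(c_k,b)=1/k$ and all other distances between distinct points equal to $1$, the sequence $(c_k)$ converges to both $a$ and $b$.

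Your fallback for the reading in which ``distinct'' means $a_n\neq b_n$ for every $n$ does not work, though. Comparing $(\xi_{n+1})$ with $(\xi_n)$ first needs $\eth(\xi_n,\xi_{n+1})\to 0$, which you have not established, since axiom 3 only controls distances to a fixed base point. More decisively, even granting it, the axiom would only bound $\limsup_n\eth(\xi_{n+1},\zeta)$, which is already $0$. The quantity $\eth(\xi,\zeta)$ enters your estimate only because one of the two sequences is constant.

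The correct repair is a case split:
\begin{itemize}
\item If $\xi_n=\xi$ for infinitely many $n$, then along those indices $\eth(\xi,\zeta)=\eth(\xi_n,\zeta)\to 0$, so $\eth(\xi,\zeta)=0$ directly.
\item Otherwise, discard finitely many terms so that $\xi_n\neq\xi$ for all $n$; your constant-sequence argument then goes through unchanged.
\end{itemize}
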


\begin{definition}\cite{super metric1}
A supermetric space $(W,\eth,s)$ is said to be complete if every Cauchy sequence in $W$ is convergent in $W$.
\end{definition}

Motivated by these recent developments, this paper aims to combine the concepts of \( F \)-contraction and \( S^B \)-contraction within the context of super-metric spaces. Specifically, we introduce the notions of \( S^F \)-contraction and Bianchini \( S^F \)-contraction. We demonstrate that these new concepts are genuine generalizations of \( S^B \)- and \( S^K \)-contractions, respectively, by providing nontrivial examples. Furthermore, we establish the existence and uniqueness of fixed points for mappings that satisfy these contractions. Lastly, we apply our results to a model that describes an airplane capable of automatically following the terrain.

\section{\texorpdfstring{\textit{$S^F$-contraction}: A generalization of \textit{$S^B$-contraction}}
{SF-contraction: A generalization of SB-Contraction}}

In this section, we introduce the concept of \textit{$S^F$-contraction} and prove that any mapping satisfying \textit{$S^F$-contraction} in a complete super-metric space is a \textit{Picard operator}.
\begin{definition}\label{sf definition}
    Let $(W,\eth,s)$ be a super-metric space and $S:W\times W \to W$ be a given mapping. A mapping $\Upsilon : W \to W$ is said to be $S^F$- contraction if $F \in \mathcal{F}$ and there is some $\omega   > 0$ such that
    \begin{equation}\label{sf contraction}
        \omega  + F\left(\eth(\Upsilon \xi,S(\Upsilon \xi,\Upsilon \zeta))+\eth(\Upsilon \zeta,S(\Upsilon \xi,\Upsilon \zeta))\right) \le F\left(\eth(\xi,S(\xi,\zeta))+\eth(\zeta ,S(\xi,\zeta))\right)
    \end{equation}
    for all $\xi,\zeta \in W$ with $\eth(\Upsilon \xi,S(\Upsilon \xi,\Upsilon \zeta))+\eth(\Upsilon \zeta,S(\Upsilon \xi,\Upsilon \zeta))\ne 0$.
\end{definition}

\begin{remark}\label{remark1} Under different mapping $F \in \mathcal{F}$, we obtain the following consequences:
    \begin{enumerate}
        \item [1.] If $F(\xi)=\ln \xi $, \textit{$S^F$-contraction} reduces to \textit{$S^B$-contraction} with $\beta=e^{-\omega  }$.
        \item[2.]  If $F(\xi)=\ln \xi  +\xi$, \textit{$S^F$-contraction} reduces to
        \begin{equation*}
            \frac{\eth(\Upsilon \xi,S(\Upsilon \xi,\Upsilon \zeta))+\eth(\Upsilon \zeta,S(\Upsilon \xi,\Upsilon \zeta))}{\eth(\xi,S(\xi,\zeta))+\eth(\zeta ,S(\xi,\zeta))}\cdot\frac{e^{\eth(\Upsilon \xi,S(\Upsilon \xi,\Upsilon \zeta))+\eth(\Upsilon \zeta,S(\Upsilon \xi,\Upsilon \zeta))}}{e^{\eth(\xi,S(\xi,\zeta))+\eth(\zeta ,S(\xi,\zeta))}}\le e^{-\omega  }.
        \end{equation*}
    \end{enumerate}
\end{remark}

In the following example, we demonstrate that the class of \textit{$S^F$-contraction} contains \textit{$S^B$-contraction}, but not conversely.

\begin{example}
Consider the sequence $(\xi_n)_{n\in\mathbb N}$ defined by
\[
\xi_1=1^3,\quad \xi_2=1^3+2^3,\quad \ldots,\quad 
\xi_n=1^3+2^3+\cdots+n=\left(\frac{n(n+1)}{2}\right)^2.
\]
Let
\[
W=\{\xi_n:\; n\in\mathbb N\}
\]
and equip $W$ with the metric
\[
\eth(\xi,\zeta)=|\xi-\zeta|, \qquad \xi,\zeta \in W.
\]

Define $S(\xi,\zeta)=1$ for $\xi,\zeta \in W$,
and $\Upsilon :W\to W$ by
\[
\Upsilon(\xi_1)=\xi_1, \qquad 
\Upsilon(\xi_n)=\xi_{n-1} \quad \text{for } n>1.
\]
For simplicity, let us denote $O(\xi ,\zeta)=\eth(\xi,S(\xi,\zeta))$ and $P(\xi ,\zeta)=\eth(\zeta ,S(\xi,\zeta))$.
\medskip

\noindent
\textbf{Step 1: $\Upsilon $ is not \textit{$S^B$-contraction}.}

We compute
\begin{align*}
    \frac{O(\Upsilon \xi_n,\Upsilon \xi _1)+P(\Upsilon \xi_n,\Upsilon \xi _1)}{O(\xi_n,\xi_1)+P(\xi_n,\xi_1)}=&\frac{O(\xi_{n-1},\xi_1)+P(\xi_{n-1},\xi_1)}{O(\xi_n,\xi_1)+P(\xi_n,\xi_1)}\\
    =&\frac{\eth(\xi_{n-1},S(\xi_{n-1},\xi_1))+\eth(\xi_{1},S(\xi_{n-1},\xi_1))}{\eth(\xi_{n},S(\xi_{n},\xi_1))+\eth(\xi_{1},S(\xi_{n},\xi_1))} \\
    =& \frac{\eth(\xi_{n-1},1)+\eth(\xi_{1},1)}{\eth(\xi_{n},1)+\eth(\xi_{1},1)}\\
    =& \frac{\xi_{n-1}-1}{\xi_{n}-1}
\end{align*}

Taking limits as $n\to\infty$, we obtain
\[
\lim_{n\to\infty}
\frac{d(\Upsilon \xi_n,S(\Upsilon \xi_n,\Upsilon \xi _1))+d(\Upsilon \xi_1,S(\Upsilon \xi_n,\Upsilon \xi _1))}{d(\xi_n,S(\xi_n,\xi _1))+d(\xi_1,S(\xi_n,\xi _1))}=1.
\]
Therefore, $\Upsilon $ is not an \textit{$S^F$-contraction}.

\medskip

\noindent
\textbf{Step 2: $\Upsilon $ is an \textit{$S^F$-contraction}.}

Observe that for $m,n\in\mathbb N$,
\[
d(\Upsilon \xi_m,S(\Upsilon \xi_m,\Upsilon \xi _n))+d(\Upsilon \xi_n,S(\Upsilon \xi_m,\Upsilon \xi _n))\ne 0
\Longleftrightarrow
\big((m>2 \wedge n=1)\vee (m>n>1)\big).
\]

\medskip

\noindent
\emph{Case 1: $m>2$.}
\begin{align*}
    \frac{O(\Upsilon \xi_m,\Upsilon \xi _1)+P(\Upsilon \xi_m,\Upsilon \xi _1)}{O(\xi_m,\xi_1)+P(\xi_m,\xi_1)}
\cdot\frac{e^{O(\Upsilon \xi_m,\Upsilon \xi _1)+P(\Upsilon \xi_m,\Upsilon \xi _1)}}{e^{O(\xi_m,\xi_1)P(\xi_m,\xi_1)}}
=& \frac{\xi_{m-1}-1}{\xi_m-1}e^{\xi_{m-1}-\xi_m}\\
=& \frac{m^2(m-1)^2-1}{m^2(m+1)^2-1}e^{-m^3}\\
<& e^{-m^3}\\
\le& e^{-27}.
\end{align*}

\medskip

\noindent
\emph{Case 2: $m>n>1$.}
\begin{align*}
    \frac{O(\Upsilon \xi_m,\Upsilon \xi _n)+P(\Upsilon \xi_m,\Upsilon \xi _n)}{O(\xi_m,\xi_n)+P(\xi_m,\xi_n)}
\cdot\frac{e^{O(\Upsilon \xi_m,\Upsilon \xi _n)+P(\Upsilon \xi_m,\Upsilon \xi _n)}}{e^{O(\xi_m,\xi_n)+P(\xi_m,\xi_n)}}
=& \frac{\xi_{m-1}+\xi_{n-1}-2}{\xi_m+\xi_{n}-2}\cdot\frac{e^{\xi_{m-1}+\xi_{n-1}}}{e^{\xi_m+\xi_{n}}}\\
<& \frac{e^{\xi_{m-1}+\xi_{n-1}}}{e^{\xi_m+\xi_{n}}}\\
=& e^{-(n^3+m^3)}\\
\le& e^{-35}.
\end{align*}

Hence, by Remark \ref{remark1}, $\Upsilon $ satisfies the $S^{F}$-contraction with $F(\xi)=\ln \xi +\xi$ 
 and $\omega  =27$.
\end{example}

\begin{lemma}\label{asym lemma}
    Let $(W,\eth,s)$ be a super-metric space and $\Upsilon $ be a \textit{$S^F$-contraction}. Then, $\Upsilon $ is asymptotically regular.
\end{lemma}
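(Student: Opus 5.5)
The plan is to iterate the $S^F$-contraction along a Picard orbit with the pair $(\xi,\zeta)=(\xi_n,\xi_{n+1})$. This produces a quantity that $F$ drives to $-\infty$, and we then transfer its decay to $\eth(\xi_n,\xi_{n+1})$.

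Fix $\xi_0\in W$ and put $\xi_n=\Upsilon^n\xi_0$. If $\xi_{n_0}=\xi_{n_0+1}$ for some $n_0$, the orbit is eventually constant and $\eth(\xi_n,\xi_{n+1})=0$ for $n\ge n_0$, so nothing remains to prove. Assume therefore $\xi_n\neq\xi_{n+1}$ for all $n$. Define
\[
D_n=\eth\big(\xi_n,S(\xi_n,\xi_{n+1})\big)+\eth\big(\xi_{n+1},S(\xi_n,\xi_{n+1})\big).
\]
Since $\xi_n\ne\xi_{n+1}$, the two points $\xi_n,\xi_{n+1}$ cannot both equal $S(\xi_n,\xi_{n+1})$. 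By axiom 1 of Definition \ref{sms}, $D_n>0$ for every $n$.

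We then apply \eqref{sf contraction} with $\xi=\xi_{n-1}$ and $\zeta=\xi_n$. Because $\Upsilon\xi_{n-1}=\xi_n$ and $\Upsilon\xi_n=\xi_{n+1}$, the left-hand argument is exactly $D_n\neq 0$ and the right-hand argument is $D_{n-1}$. This gives $\omega+F(D_n)\le F(D_{n-1})$, and iterating yields
\[
F(D_n)\le F(D_0)-n\omega\to-\infty .
\]
By (W2), $D_n\to 0$. Since both summands of $D_n$ are nonnegative, we get
\[
\eth(\xi_n,u_n)\to 0 \quad\text{and}\quad \eth(\xi_{n+1},u_n)\to 0,
\]
where $u_n:=S(\xi_n,\xi_{n+1})$. (W1) is only needed implicitly here, to keep the comparison monotone; (W3) is not needed for this lemma.

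The main obstacle is the last step: passing from $D_n\to0$ to $\eth(\xi_n,\xi_{n+1})\to 0$. In a genuine metric space this is immediate, since $\eth(\xi_n,\xi_{n+1})\le D_n$ by the triangle inequality through $u_n$. In a super-metric space only axiom 3 of Definition \ref{sms} is available, and it is formulated with $\limsup$ along sequences.

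I would first try to use it as a "triangle inequality in the limit". The sequence $\xi_n$ is within vanishing distance of the sequence $u_n$, and $u_n$ is within vanishing distance of the shifted sequence $\xi_{n+1}$. The goal is to bound $\limsup\eth(\xi_n,\xi_{n+1})$ by $s\limsup\eth(u_n,\xi_{n+1})=0$.

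If axiom 3 cannot be matched to these particular sequences, I would fall back on one of two options. The first is to note that the conclusion holds in the natural sense of asymptotic regularity relative to $S$, namely $D_n\to0$. The second is to impose the triangle-type estimate $\eth(\xi,\zeta)\le \eth(\xi,S(\xi,\zeta))+\eth(\zeta,S(\xi,\zeta))$, which holds in every metric space and which the later results presumably use. Under that estimate, $\eth(\xi_n,\xi_{n+1})\le D_n\to0$, so $\Upsilon$ is asymptotically regular at every $\xi_0$.
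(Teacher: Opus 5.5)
Everything up to $D_n\to0$ matches the paper. The paper applies \eqref{sf contraction} to $(\xi_n,\xi_{n-1})$ with $S(\xi_{n+1},\xi_n)$; your orientation works equally well, and your check that $D_n\neq0$ fills a point the paper skips. The paper then finishes with exactly your ``first try'', asserting $\limsup_n\eth(\xi_n,\xi_{n+1})\le s\limsup_n\eth(S(\xi_{n+1},\xi_n),\xi_{n+1})=0$. Your doubt about that step is justified, and it is a genuine gap that the paper shares. Axiom 3 of Definition \ref{sms} only provides, for each \emph{fixed} $\zeta$, \emph{some} pair of sequences obeying the $\limsup$ bound. Here the sequences are prescribed and the base point is the moving $\xi_{n+1}$.

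In fact the lemma is false as stated. Let $W=\{a_n\}_{n\ge0}\cup\{c_{ij}\}_{0\le i<j}\cup\{p\}$, with $\eth(a_i,c_{ij})=\eth(a_j,c_{ij})=2^{-i}$, $\eth(p,z)=10$ for $z\ne p$, and $\eth=1$ between all other distinct points. Let $\Upsilon a_n=a_{n+1}$ and $\Upsilon z=a_1$ for every other $z$. Let $S(z,z)=z$, $S(a_i,a_j)=S(a_j,a_i)=c_{ij}$ for $i<j$, and $S=p$ on all remaining pairs. With $D(x,y)=\eth(x,S(x,y))+\eth(y,S(x,y))$:
\begin{itemize}
\item $D(a_i,a_j)=2^{1-\min\{i,j\}}$ for $i\ne j$, hence $D(\Upsilon a_i,\Upsilon a_j)=\frac12D(a_i,a_j)$;
\item $D(\Upsilon x,\Upsilon y)\le1<5\le\frac12D(x,y)$ for $x\ne y$ not both among the $a_n$.
\end{itemize}
So $\Upsilon$ is an $S^F$-contraction with $F=\ln$ and $\omega=\ln2$.

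For axiom 3: if $\eth(x_n,y_n)\to0$, then eventually either $x_n=y_n$ or $\{x_n,y_n\}=\{a_k,c_{ij}\}$ with all indices large. Hence $\eth(x_n,\zeta)=\eth(y_n,\zeta)$ eventually for every fixed $\zeta$, so axiom 3 holds with $s=1$ even for all sequences; existentially, use $a_n$ and $c_{n,n+1}$. Yet $\eth(\Upsilon^na_0,\Upsilon^{n+1}a_0)=\eth(a_n,a_{n+1})=1$ for every $n$. (Cauchy sequences here are eventually constant, so the space is even complete, and $\Upsilon$ has no fixed point.)

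Consequently your proposal does not prove the lemma as stated, and neither fallback rescues it. Fallback (1) replaces asymptotic regularity by the weaker statement $D_n\to0$, and fallback (2) adds a hypothesis. Some additional hypothesis is unavoidable. Options include your estimate $\eth(\xi,\zeta)\le\eth(\xi,S(\xi,\zeta))+\eth(\zeta,S(\xi,\zeta))$, a $b$-metric inequality $\eth(\xi,\zeta)\le s[\eth(\xi,u)+\eth(u,\zeta)]$, or a form of axiom 3 that allows a moving base point. Under any of these, your argument closes with $\limsup_n\eth(\xi_n,\xi_{n+1})\le s\limsup_n D_n=0$.
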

\begin{proof}
        Let $\xi_0 \in W$ be arbitrary and define a sequence $\{\xi_n\}$ in $W$ by $\xi_n = \Upsilon \xi _{n-1}, \quad n \in \{1,2,\ldots\}$. Let $\lambda_n = \eth(\xi_{n+1}, S(\xi_{n+1}, \xi_n))$, $ \eta_n = \eth(\xi_{n}, S(\xi_{n+1}, \xi_n)) \text{ for } n \in \{0,1,\ldots\}$.

Now, setting $(\xi ,\zeta)=(\xi_n,\xi_{n-1})$ in \eqref{sf contraction}, we have
\begin{equation}\label{f2}
\begin{split}
    F(\lambda_n+\eta_n)=&F(\eth(\Upsilon \xi_{n},S(\Upsilon \xi_{n},\Upsilon \xi _{n-1}))+\eth(\Upsilon \xi_{n-1},S(\Upsilon \xi_{n},\Upsilon \xi _{n-1})))\\
    \le& F(\eth(\xi_{n},S(\xi_{n},\xi_{n-1}))+\eth(\xi_{n-1},S(\xi_{n},\xi_{n-1})))-\omega  \\
    =& F(\lambda_{n-1}+\eta_{n-1})-\omega  \\
    \le& F(\lambda_{n-2}+\eta_{n-2})-2\omega  \\
    &\vdots\\
    \le& F(\lambda_{0}+\eta_{0})-n\omega  .
    \end{split}
\end{equation}
Letting $n \to \infty$ in inequality \eqref{f2}, we get
\begin{equation}\label{f3}
    \lim_{n\to \infty}F(\lambda_n+\eta_n)=-\infty.
\end{equation}

From (W2) together with \eqref{f3}, we get
\begin{equation}\label{f4}
    \lim_{n \to \infty} \left[\lambda_n+\eta_n \right] =0, \text{ i.e. } \lim_{n \to \infty}\left[\eth(\xi_{n+1}, S(\xi_{n+1}, \xi_n)) + \eth(\xi_{n}, S(\xi_{n+1}, \xi_n))\right] =0.
\end{equation}

    By the definition of limit with \eqref{f4}, we have
    \begin{equation}\label{f5}
        \lim_{n \to \infty}\eth(\xi_{n+1}, S(\xi_{n+1}, \xi_n))=0 \text{ and }  \lim_{n \to \infty}\eth(\xi_{n}, S(\xi_{n+1}, \xi_n))=0.
    \end{equation}
    
  Using super-metric triangle inequality and \eqref{f5}, we have
  $$0 \le \liminf_{n \to \infty}\eth(\xi_n,\xi_{n+1})\le \limsup_{n \to \infty}\eth(\xi_n,\xi_{n+1}) \le s\limsup_{n \to \infty}\eth(S(\xi_{n+1},\xi_n),\xi_{n+1})=0,$$
  \begin{equation}\label{f6}
      \text{ i.e. } \lim_{n \to \infty}\eth(\Upsilon^n\xi_0,\Upsilon^{n+1}\xi_0)=\lim_{n \to \infty}\eth(\xi_n,\xi_{n+1})=0.
  \end{equation}

  Hence, $\Upsilon $ is asymtotically regular.
\end{proof}
\begin{lemma}\label{convergent lemma}
    Let $(W, \eth, s)$ be a complete super-metric space and $\Upsilon : W \to W$ be an asymptotically regular mapping. Then, the Picard iteration ${\Upsilon^n\xi} $ for the initial point $\xi \in W$ is a convergent sequence on $W$. 
\end{lemma}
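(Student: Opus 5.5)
The plan is to show that the Picard sequence $\xi_n=\Upsilon^n\xi$ is Cauchy in the sense of the Cauchy definition for super-metric spaces, and then to invoke completeness of $(W,\eth,s)$ to get a limit $u\in W$. By the proposition on uniqueness of limits in super-metric spaces, this $u$ is the unique limit of $\{\Upsilon^n\xi\}$. Asymptotic regularity at $\xi$ gives the starting input
\[
\lim_{n\to\infty}\eth(\xi_n,\xi_{n+1})=0,
\]
exactly as obtained in \eqref{f6} of Lemma \ref{asym lemma}.

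For the Cauchy property I will argue by contradiction. Suppose $\lim_{n\to\infty}\sup\{\eth(\xi_n,\xi_p):p>n\}\neq 0$. Then there are $\varepsilon>0$ and indices $m_k>n_k\ge k$ with $\eth(\xi_{n_k},\xi_{m_k})\ge\varepsilon$. Choose $m_k$ minimal with this property, so that $\eth(\xi_{n_k},\xi_{m_k-1})<\varepsilon$. Next I apply the super-metric triangle inequality (axiom 3 of Definition \ref{sms}) with the point $\zeta=\xi_{n_k}$ and the pair of sequences $(\xi_{m_k-1})$, $(\xi_{m_k})$. These two sequences satisfy $\eth(\xi_{m_k-1},\xi_{m_k})\to 0$ by asymptotic regularity. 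The inequality then yields
\[
\varepsilon\le\limsup_{k\to\infty}\eth(\xi_{m_k},\xi_{n_k})\le s\,\limsup_{k\to\infty}\eth(\xi_{m_k-1},\xi_{n_k})\le s\varepsilon .
\]
Shifting indices in the same way, using $\eth(\xi_{n_k},\xi_{n_k+1})\to0$, gives the analogous bounds for $\eth(\xi_{m_k+1},\xi_{n_k+1})$. Hence all these quantities stay in the band $[\varepsilon/s,\,s^2\varepsilon]$.

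The main obstacle is the final step: turning this band into a contradiction. In the contraction-based settings one would feed the pair $(\xi_{m_k},\xi_{n_k})$ into the contractive inequality and obtain a strict decrease. Here the only hypothesis is asymptotic regularity, and it controls consecutive terms only. So I will first try to exploit the fact that axiom 3 lets us compare $\limsup$'s along any pair of asymptotically close sequences. The idea is to iterate the shift argument $j$ times, for $j$ growing with $k$, and show that $\limsup_k\eth(\xi_{m_k},\xi_{n_k})$ must coincide with a $\limsup$ along a pair that asymptotic regularity forces to $0$. This would contradict the lower bound $\varepsilon$.

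I expect this step to be genuinely delicate, and possibly not achievable from asymptotic regularity alone. The reason is that a sequence like the partial sums of the harmonic series is asymptotically regular but not Cauchy. If the iteration argument breaks down, I will record precisely which additional property of $\Upsilon$ is used at that point, for instance the contractive inequality for the specific pairs $(\xi_{m_k},\xi_{n_k})$.

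Once the Cauchy property is established, completeness gives $\xi_n\to u$ and the lemma follows.
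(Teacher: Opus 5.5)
Your suspicion is correct: the step you single out as the main obstacle cannot be completed, by your iteration idea or any other, because the lemma is false. Your harmonic-series remark is a counterexample to the statement itself, not just to a method. Take $H_n=\sum_{i=1}^{n}1/i$ and $W=\{H_n:n\in\mathbb{N}\}$ with $\eth(\xi,\zeta)=|\xi-\zeta|$; this is a metric, hence a super-metric with $s=1$. Let $\Upsilon H_n=H_{n+1}$. A Cauchy sequence in $W$ is bounded, so it lives in a finite subset of $W$ and is eventually constant; thus $W$ is complete. Moreover $\eth(\Upsilon^nH_m,\Upsilon^{n+1}H_m)=1/(m+n+1)\to0$, so $\Upsilon$ is asymptotically regular, yet $\Upsilon^nH_m=H_{m+n}\to\infty$ for every $m$. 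A continuous example is $\Upsilon\xi=\xi+1/\xi$ on $[1,\infty)$, where $\xi_n\ge\sqrt{2n}$ and $\eth(\xi_n,\xi_{n+1})=1/\xi_n\to0$.

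Concretely, in a metric space, iterating the shift $m_k-n_k$ times only gives $\eth(\xi_{n_k},\xi_{m_k})\le\sum_{i=n_k}^{m_k-1}\eth(\xi_i,\xi_{i+1})$. Asymptotic regularity does not make these tail sums small, and with $s>1$ matters only get worse. Even your band estimate is not licensed in a super-metric space. Axiom~3 of Definition~\ref{sms} compares two asymptotically close sequences against a \emph{fixed} point $\zeta$, whereas you use $\zeta=\xi_{n_k}$, which moves with $k$. Only in a genuine metric space does the ordinary triangle inequality justify that step.

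The paper's proof does not overcome this; it fails at the same place. It asserts $\limsup_n\eth(\xi_n,\xi_{n+2})\le s\limsup_n\eth(\xi_{n+1},\xi_{n+2})$, again applying axiom~3 with the moving point $\xi_{n+2}$. It then claims, by induction, that $\lim_n\sup\{\eth(\xi_n,\xi_\kappa):\kappa>n\}=0$. Even granting each step, induction yields only $\eth(\xi_n,\xi_{n+j})\to0$ for each fixed $j$, which the harmonic sums also satisfy. The Cauchy condition needs uniformity over all $\kappa>n$.

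So your closing instinct is the right one: a correct version must use more than asymptotic regularity. For instance, you could apply the $S^F$- or Bianchini $S^F$-contractive inequality to the pairs $(\xi_{m_k},\xi_{n_k})$. You would also need a triangle-type estimate that is actually available along sequences in a super-metric space. Until that is supplied, Theorems~\ref{sf theorem} and~\ref{bsf theorem}, which rely on this lemma, inherit the gap.
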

\begin{proof}
    Since $\Upsilon $ is asymptotically regular mapping, $\lim_{n \to \infty}\eth(\xi_n,\xi_{n+1})=0$.

     Now suppose that $\kappa, n \in \mathbb{N}$ and $\kappa > n$. If $\xi_n = \xi_\kappa$, then
\[
\Upsilon^\kappa(\xi_0) = \Upsilon^n(\xi_0).
\]
So,
\[
\Upsilon^{\kappa-n}(\Upsilon^n(\xi_0)) = \Upsilon^n(\xi_0).
\]
Thus, $\Upsilon^n(\xi_0)$ is a fixed point of $\Upsilon^{\kappa-n}$. Also,
\[
\Upsilon  \big(\Upsilon^{\kappa-n}(\Upsilon^n(\xi_0))\big)
= \Upsilon^{\kappa-n}\big(\Upsilon(\Upsilon^n(\xi_0))\big)
= \Upsilon(\Upsilon^n(\xi_0)).
\]
This means that $\Upsilon(\Upsilon^n(\xi_0))$ is also a fixed point of $\Upsilon^{\kappa-n}$. Hence,
\[
\Upsilon(\Upsilon^n(\xi_0)) = \Upsilon^n(\xi_0).
\]
Therefore, $\Upsilon^n(\xi_0)$ is a fixed point of $\Upsilon $. So, without loss of generality, we may assume that
\[
\xi_n \neq \xi_\kappa \quad \text{for all } \kappa > n.
\]

Now, using the  super-metric triangle inequality, we have
\[
\limsup_{n \to \infty} \eth(\xi_n, \xi_{n+2})
\le s\limsup_{n \to \infty} \eth(\xi_{n+1}, \xi_{n+2}) = 0.
\]

As a result, since $\limsup_{n \to \infty} \eth(\xi_n, \xi_{n+2}) = 0$, we obtain
\[
\limsup_{n \to \infty} \eth(\xi_n, \xi_{n+3})
\le s\limsup_{n \to \infty} \eth(\xi_{n+2}, \xi_{n+3}) = 0.
\]

Now, by induction, we can conclude that
\[
\limsup_{n \to \infty} \{ \eth(\xi_n, \xi_\kappa) : \kappa > n \} = 0.
\]

This implies that the sequence $\{\xi_n\}$ is a Cauchy sequence.  
Since $W$ is a complete super-metric space, $\{\xi_n\}$ is convergent in $W$.
\end{proof}

\begin{theorem}\label{sf theorem}
  Let $(W,\eth,s)$ be a complete super-metric space and $S:W\times W \to W$ be a given mapping. 
If $\Upsilon  : W \to W$ is an \textit{$S^F$-contraction}, then $\Upsilon $ is a \textit{Picard operator} provided that, for all $u,v \in W$,
\[
\lim_{n \to \infty} \eth(\Upsilon^n u, v) = 0 
\quad \Longrightarrow \quad 
\lim_{n \to \infty} \eth(\Upsilon(\Upsilon^n u), \Upsilon  v) = 0.
\]
\end{theorem}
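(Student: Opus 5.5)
The plan is to chain the two lemmas already proved and then use the continuity-type hypothesis to identify the limit as a fixed point. Fix an arbitrary $\xi_0\in W$ and put $\xi_n=\Upsilon^n\xi_0$. By Lemma \ref{asym lemma}, $\Upsilon$ is asymptotically regular. Lemma \ref{convergent lemma}, which uses completeness of $(W,\eth,s)$, then shows that $\{\xi_n\}$ converges to some $u\in W$, i.e. $\lim_{n\to\infty}\eth(\Upsilon^n\xi_0,u)=0$. (If the iteration happens to repeat, the argument inside Lemma \ref{convergent lemma} already produces a fixed point, and the sequence is eventually constant, so the same conclusion holds.)

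Next I will show that $u$ is a fixed point. Applying the hypothesis with the pair $(\xi_0,u)$ gives $\lim_{n\to\infty}\eth(\Upsilon(\Upsilon^n\xi_0),\Upsilon u)=0$, that is, $\xi_{n+1}\to\Upsilon u$. The subsequence $\{\xi_{n+1}\}$ also converges to $u$. Since limits in a super-metric space are unique by the Proposition quoted from \cite{super metric1}, we get $\Upsilon u=u$. Because $\xi_0$ was arbitrary, every Picard sequence converges to a fixed point.

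Uniqueness is the step I expect to be the main obstacle. The contraction controls only the auxiliary quantity $\eth(\xi,S(\xi,\zeta))+\eth(\zeta,S(\xi,\zeta))$, not $\eth(\xi,\zeta)$ itself, and no assumption $S(\xi,\xi)=\xi$ is imposed here.

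First I would try the following argument. Suppose $u,v$ are fixed points with $u\neq v$, and set $Q=\eth(u,S(u,v))+\eth(v,S(u,v))$. If $Q>0$, then \eqref{sf contraction} with $(\xi,\zeta)=(u,v)$ gives $\omega+F(Q)\le F(Q)$, since $\Upsilon u=u$ and $\Upsilon v=v$. This contradicts $\omega>0$. If $Q=0$, then $\eth(u,S(u,v))=0=\eth(v,S(u,v))$, so $u=S(u,v)=v$ by axiom 1 of Definition \ref{sms}. This contradicts $u\neq v$. In both cases $u=v$, so the fixed point is unique.

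Finally, every Picard sequence converges to a fixed point, and that fixed point is unique. Hence $\Upsilon^n\xi\to u$ for every $\xi\in W$, and $\Upsilon$ is a Picard operator.
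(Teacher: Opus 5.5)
Your argument matches the paper's proof step for step. Both get convergence of $\{\Upsilon^n\xi_0\}$ from Lemma~\ref{asym lemma} and Lemma~\ref{convergent lemma}, identify the limit as a fixed point using the hypothesis and uniqueness of limits, and prove uniqueness by substituting two fixed points into \eqref{sf contraction}. Your uniqueness step is cleaner than the paper's: the paper applies \eqref{sf contraction} to $(u,v)$ without checking $Q\neq 0$, while you treat $Q=0$ separately and get $u=S(u,v)=v$ from axiom~1.

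The real problem is a step you pass over quickly, which the paper's proof shares: Lemma~\ref{convergent lemma} is false as stated. Take $W=[0,\infty)$ with $\eth(\xi,\zeta)=|\xi-\zeta|$, a complete metric space and hence a super-metric space with $s=1$. The map $\Upsilon\xi=\xi+e^{-\xi}$ is asymptotically regular, since $\eth(\xi_n,\xi_{n+1})=e^{-\xi_n}\to0$. Yet every orbit increases to $+\infty$. The ``induction'' in that lemma only yields $\eth(\xi_n,\xi_{n+k})\to0$ for each fixed $k$. That is much weaker than the Cauchy condition $\sup_{\kappa>n}\eth(\xi_n,\xi_\kappa)\to0$. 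So the existence half of your proof, and of the paper's, fails at exactly the point where the contraction should be doing quantitative work.

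To close the gap you must use the $S^F$-inequality itself, not just asymptotic regularity. Set $\gamma_n=\eth(\xi_{n+1},S(\xi_{n+1},\xi_n))+\eth(\xi_n,S(\xi_{n+1},\xi_n))$, assuming no $\gamma_n$ vanishes. Then $F(\gamma_n)\le F(\gamma_0)-n\omega$, and (W3) gives $n\gamma_n^k\to0$. Hence $\gamma_n\le n^{-1/k}$ for large $n$, and $\sum_n\gamma_n<\infty$.

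In a metric space, $\eth(\xi_n,\xi_{n+1})\le\gamma_n$ together with the triangle inequality makes $\{\xi_n\}$ Cauchy; this is Wardowski's argument. After that, the rest of your proof goes through unchanged.

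In a genuine super-metric space, however, there is no triangle inequality. The super-metric axiom only controls upper limits of distances to a single fixed point $\zeta$. So two steps are unavailable:
\begin{itemize}
\item the passage from $\gamma_n\to0$ to $\eth(\xi_n,\xi_{n+1})\to0$, which Lemma~\ref{asym lemma} makes by applying the axiom with the $n$-dependent base point $\xi_{n+1}$;
\item the summation step that turns $\sum_n\gamma_n<\infty$ into the Cauchy property.
\end{itemize}
Some further argument or hypothesis is needed in that setting.
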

\begin{proof}
 Let $\xi_0 \in W$ be arbitrary and define a sequence $\{\xi_n\}$ in $W$ by $\xi_n = \Upsilon \xi _{n-1}, \quad n \in \{1,2,\ldots\}$. If $\xi_{n_0+1} = \xi_{n_0}$ for some $n_0 \in \{0,1,\ldots\}$, then $\Upsilon \xi_{n_0} = \xi_{n_0}$, and so $\Upsilon $ possesses a fixed point.

Now, suppose that $\xi_{n+1} \neq \xi_n$ for $n \in \{0,1,\ldots\}$.
    Since $\Upsilon : W \to W$ is an \textit{$S^F$-contraction} in super-metric space $(W,\eth,s)$, by Lemma \ref{asym lemma}, $\Upsilon $ is asymptotically regular in $W$. By using Lemma \ref{convergent lemma}, $\Upsilon^n \xi_0$ is convergent, i.e. 
  there exists some $\xi^* \in W$ such that \begin{equation}\label{f7}
    \lim_{n \to \infty} \eth(\Upsilon^n\xi_0,\xi^*)=0,
\end{equation} 
which implies  \begin{equation}\label{f77}
    \lim_{n \to \infty} \eth(\Upsilon(\Upsilon^n\xi_0),\Upsilon \xi ^*)=0.
\end{equation} 
From \eqref{f7} and \eqref{f77}, we can conclude that  $\xi^*$ is the fixed point of $\Upsilon $.

Suppose there exist two fixed points $u,v$ such that $u\ne v$. Now, using \eqref{sf contraction}, we have
\begin{equation*}
    \omega  + F\left(\eth(\Upsilon u,S(\Upsilon u,\Upsilon  v))+\eth(\Upsilon v,S(\Upsilon u,\Upsilon  v))\right) \le F\left(\eth(u,S(u,v))+\eth(v,S(u,v))\right).
\end{equation*}
which is equivalent to
\begin{equation*}
    \omega  + F\left(\eth(u,S(u,v))+\eth(v,S(u,v))\right) \le F\left(\eth(u,S(u,v))+\eth(v,S(u,v))\right).
\end{equation*}
This is a contradiction. 
So, $\Upsilon $ has a unique fixed point, i.e. 
\[
\mathrm{Fix}(F)=\{\xi ^{*}\},
\]
where $\xi^{*}$ is defined by \eqref{f7}. 

Hence, $\Upsilon$ is a Picard operator.
\end{proof}

\begin{example}
Let $W=[0,1]$ and define $\eth:W\times W\to[0,\infty)$ by
\[
\eth(\xi,\zeta)=
\begin{cases}
\xi\zeta, & \xi \neq \zeta,\ \xi,\zeta \in(0,1),\\[4pt]
0, & \xi =\zeta ,\\[4pt]
\zeta, & \xi =0,\ \zeta \in (0,1),\\[4pt]
1-\dfrac{\zeta}{2}, & \xi =1,\ \zeta \in [0,1).
\end{cases}
\]
 Then $(W,\eth,s)$ is a super-metric space.

Define $\Upsilon :W\to W$ by
\[
\Upsilon(\xi)=\frac{\xi }{4}, \quad \xi \in[0,1), 
\qquad 
\Upsilon(1)=\frac{1}{8},
\]
and $S:W\times W\to W$ by
\[
S(\xi,\zeta)=\frac{\xi+\zeta }{2}.
\]

Let $F:(0,\infty)\to\mathbb{R}$ be given by $F(t)=-\frac{1}{\sqrt{t}}$, then $F\in\mathcal{F}$.

For simplicity, denote $L(\xi ,\zeta)= \eth(\Upsilon \xi,S(\Upsilon \xi,\Upsilon \zeta))+\eth(\Upsilon \zeta,S(\Upsilon \xi,\Upsilon \zeta)) $ and $R(\xi ,\zeta)=\eth(\xi,S(\xi,\zeta))+\eth(\zeta ,S(\xi,\zeta))$.

\medskip

\textbf{Case 1:} $\xi,\zeta \in(0,1)$, $\xi \neq \zeta$.

Since $S(\xi,\zeta)=\dfrac{\xi+\zeta }{2}$, $\eth(\xi,S(\xi,\zeta))=\frac{\xi (\xi+\zeta)}{2},
\text{ and }
\eth(\zeta ,S(\xi,\zeta))=\frac{\zeta (\xi+\zeta)}{2}$,
\[
R(\xi ,\zeta)=\eth(\xi,S(\xi,\zeta))+\eth(\zeta ,S(\xi,\zeta))
=
\frac{(\xi+\zeta)^2}{2}.
\]
And, since $S(\Upsilon \xi,\Upsilon \zeta)=\frac{\xi+\zeta }{8}$, $\eth(\Upsilon \xi,S(\Upsilon \xi,\Upsilon \zeta))=\frac{\xi (\xi+\zeta)}{32},
\text{ and }
\eth(\Upsilon \zeta,S(\Upsilon \xi,\Upsilon \zeta))=\frac{\zeta (\xi+\zeta)}{32}$,
\[
L(\xi ,\zeta)=\eth(\Upsilon \xi,S(\Upsilon \xi,\Upsilon \zeta))+\eth(\Upsilon \zeta,S(\Upsilon \xi,\Upsilon \zeta))
=
\frac{(\xi+\zeta)^2}{32}.
\]
Therefore,
\begin{align*}
    F(R(\xi ,\zeta))-F(L(\xi ,\zeta))
=&\frac{4\sqrt{2}}{\xi+\zeta }-\frac{\sqrt{2}}{\xi+\zeta }\\
=&\frac{3\sqrt{2}}{\xi+\zeta }\\
\ge&
\frac{3\sqrt{2}}{2}.
\end{align*}

\medskip
\textbf{Case 2:} $\xi =0$, $\zeta \in (0,1)$.

Since $S(0,\zeta)=\dfrac{\zeta }{2}$, 
$\eth(0,S(0,\zeta))=\dfrac{\zeta }{2},
\text{ and }
\eth(\zeta ,S(0,\zeta))=\dfrac{\zeta^ 2}{2}$,
\[
R(\xi ,\zeta)=\eth(0,S(0,\zeta))+\eth(\zeta ,S(0,\zeta))
=
\frac{\zeta (1+\zeta)}{2}.
\]

And, since $\Upsilon(0)=0$, $\Upsilon(\zeta)=\dfrac{\zeta }{4}$ and $S(\Upsilon 0,\Upsilon \zeta)=\dfrac{\zeta }{8}$,
\[
\eth(\Upsilon \xi,S(\Upsilon \xi,\Upsilon \zeta))=\frac{\zeta }{8},
\text{ and }
\eth(\Upsilon \zeta,S(\Upsilon \xi,\Upsilon \zeta))=\frac{\zeta^ 2}{32},
\]
\[
L(\xi ,\zeta)=\eth(\Upsilon \xi,S(\Upsilon \xi,\Upsilon \zeta))+\eth(\Upsilon \zeta,S(\Upsilon \xi,\Upsilon \zeta))
=
\frac{4\zeta+ \zeta ^2}{32}.
\]

Therefore,
\begin{align*}
F(R(\xi ,\zeta))-F(L(\xi ,\zeta))
=&-\frac{\sqrt{2}}{\sqrt{\zeta (1+\zeta)}}
+\frac{4\sqrt{2}}{\sqrt{\zeta (4+\zeta)}}\\
=&\sqrt{2}
\left(
\frac{4}{\sqrt{\zeta (4+\zeta)}}
-
\frac{1}{\sqrt{\zeta (1+\zeta)}}
\right)\\
>& \frac{4\sqrt{2}}{\sqrt{5}}-1.
\end{align*}

\medskip
\textbf{Case 3:} $\xi =1$, $\zeta \in (0,1)$.

Since $S(1,\zeta)=\dfrac{1+\zeta }{2}$, 
$\eth(1,S(1,\zeta))=\dfrac{3-\zeta}{4},
\text{ and }
\eth(\zeta ,S(1,\zeta))=\dfrac{\zeta (1+\zeta)}{2}$,
\[
R(\xi ,\zeta)=\eth(1,S(1,\zeta))+\eth(\zeta ,S(1,\zeta))
=
\frac{3-\zeta}{4}
+
\frac{\zeta (1+\zeta)}{2}.
\]

And, since $\Upsilon(1)=\dfrac{1}{8}$, $\Upsilon(\zeta)=\dfrac{\zeta }{4}$ and 
$S(\Upsilon 1,\Upsilon \zeta)=\dfrac{1+2\zeta}{16}$,
\[
\eth(\Upsilon \xi,S(\Upsilon \xi,\Upsilon \zeta))=\frac{1+2\zeta}{128},
\text{ and }
\eth(\Upsilon \zeta,S(\Upsilon \xi,\Upsilon \zeta))=\frac{\zeta (1+2\zeta)}{64},
\]
\[
L(\xi ,\zeta)=\eth(\Upsilon \xi,S(\Upsilon \xi,\Upsilon \zeta))+\eth(\Upsilon \zeta,S(\Upsilon \xi,\Upsilon \zeta))
=
\frac{1+2\zeta}{128}
+
\frac{\zeta (1+2\zeta)}{64}.
\]

Therefore, 
\begin{align}
    F(R(\xi ,\zeta))-F(L(\xi ,\zeta))=&\sqrt{\frac{128}{1+4\zeta+ 4\zeta^ 2}}-\sqrt{\frac{4}{3+\zeta +2\zeta^ 2}}\\
    \ge& \frac{8\sqrt{2}}{3}-\frac{2}{\sqrt{6}}
\end{align}

\medskip

Combining all cases, we obtain
\[
\inf_{\xi \neq \zeta}
\left\{
F(R(\xi ,\zeta))-F(L(\xi ,\zeta))
\right\}
=
\frac{4\sqrt{2}}{\sqrt{5}}-1.
\]

Thus, $\Upsilon $ is an \textit{$S^F$-contraction} on $(W,\eth,s)$ with $0<\omega  \le\frac{4\sqrt{2}}{\sqrt{5}}-1$.

For the mapping $\Upsilon :W\to W$ defined above, we have
\[
\Upsilon^n u=
\begin{cases}
\dfrac{u}{4^n}, & u\in[0,1),\\[6pt]
\dfrac{1}{8\cdot 4^{\,n-1}}, & u=1,
\end{cases}
\]
and
\[
\Upsilon^n u \to 0 \quad \text{as } n\to\infty, i.e. \lim_{n\to\infty} \eth(\Upsilon^n u,v)=0..
\]
Also,
\[
\eth(\Upsilon(\Upsilon^n u),\Upsilon  0)=\eth(\Upsilon^{n+1}u,0)\to 0,
\]
which shows that the required condition is satisfied.
\end{example}
\subsection{Corollaries}

    \begin{corollary}\cite{bessem}
   Let $(W,\eth)$ be a complete metric space and $S:W\times W \to W$ be a given mapping. 
If $\Upsilon  : W \to W$ is an \textit{$S^B$-contraction}, then $\Upsilon $ is a \textit{Picard operator} provided that, for all $u,v \in W$,
\[
\lim_{n \to \infty} \eth(\Upsilon^n u, v) = 0 
\quad \Longrightarrow \quad 
\lim_{n \to \infty} \eth(\Upsilon(\Upsilon^n u), \Upsilon  v) = 0.
\]
\end{corollary}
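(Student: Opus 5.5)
The corollary follows from Theorem \ref{sf theorem} by specializing the function $F$ and checking that a complete metric space fits the super-metric framework.

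\textbf{Step 1: a metric space is a super-metric space.} Given a complete metric space $(W,\eth)$, we show that $(W,\eth,1)$ is a complete super-metric space. Axioms 1 and 2 of Definition \ref{sms} hold trivially. For Axiom 3, take sequences with $\eth(\xi_n,\zeta_n)\to 0$. The ordinary triangle inequality gives
\[
\eth(\zeta_n,\zeta)\le \eth(\zeta_n,\xi_n)+\eth(\xi_n,\zeta).
\]
Taking $\limsup$ yields $\limsup_{n}\eth(\zeta_n,\zeta)\le \limsup_{n}\eth(\xi_n,\zeta)$, so the axiom holds with $s=1$.

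The convergence and Cauchy notions coincide with the metric ones. Indeed, $\lim_{n}\sup_{p>n}\eth(\xi_n,\xi_p)=0$ is exactly the usual Cauchy condition. Hence completeness transfers.

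\textbf{Step 2: an $S^B$-contraction is an $S^F$-contraction.} Let $\Upsilon$ be an $S^B$-contraction with constant $\beta\in(0,1)$, and take $F(t)=\ln t$. This $F$ belongs to $\mathcal{F}$: (W1) and (W2) are standard, and (W3) holds for any $k\in(0,1)$ because $t^k\ln t\to 0$. Set $\omega=-\ln\beta>0$.

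Fix $\xi,\zeta$ with $L:=\eth(\Upsilon\xi,S(\Upsilon\xi,\Upsilon\zeta))+\eth(\Upsilon\zeta,S(\Upsilon\xi,\Upsilon\zeta))>0$, and let $R:=\eth(\xi,S(\xi,\zeta))+\eth(\zeta,S(\xi,\zeta))$. Inequality \eqref{sb contraction} gives $0<L\le \beta R$, so $R>0$ and $F(R)$ is defined. Taking logarithms,
\[
\ln L\le \ln\beta+\ln R,
\]
that is, $\omega+F(L)\le F(R)$. This is exactly \eqref{sf contraction}, matching Remark \ref{remark1}(1).

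\textbf{Step 3: apply the theorem.} The continuity-type hypothesis on sequences is identical in the corollary and in Theorem \ref{sf theorem}. Theorem \ref{sf theorem} therefore applies and shows that $\Upsilon$ is a Picard operator.

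\textbf{Main obstacle.} The only nontrivial point is confirming that $R>0$ whenever $L>0$, so that $F(R)$ is defined. This follows directly from $L\le\beta R$, as noted in Step 2.
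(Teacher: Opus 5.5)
Your proposal is correct and is the same reduction the paper uses: its proof is just ``set $F(\xi)=\ln\xi$'' in Theorem~\ref{sf theorem}, and your checks that a complete metric space is a complete super-metric space with $s=1$ and that $L>0$ forces $R>0$ fill in details the paper leaves implicit. The argument does inherit the weak point of Theorem~\ref{sf theorem}, namely Lemma~\ref{convergent lemma}, whose induction only gives $\eth(\xi_n,\xi_{n+k})\to 0$ for each fixed $k$ and not the Cauchy property; in your metric setting you can avoid that lemma, because the $S^B$-condition at $(\xi,\zeta)=(\xi_n,\xi_{n-1})$ together with the triangle inequality gives $\eth(\xi_{n+1},\xi_n)\le \eth(\xi_{n+1},S(\xi_{n+1},\xi_n))+\eth(\xi_n,S(\xi_{n+1},\xi_n))\le \beta^n\bigl(\eth(\xi_1,S(\xi_1,\xi_0))+\eth(\xi_0,S(\xi_1,\xi_0))\bigr)$, which is summable, so $\{\xi_n\}$ is Cauchy directly.
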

\begin{proof}
    Setting $F(\xi)=\ln \xi $.
\end{proof}
   \begin{corollary}\label{sfcol2}
    Let $(W,\eth,s)$ be a complete super-metric space and $S:W\times W \to W$ be a given mapping. 
If $\Upsilon  : W \to W$ is a continuous \textit{$S^F$-contraction}, then $\Upsilon $ is a \textit{Picard operator}.
\end{corollary}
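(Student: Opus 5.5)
The plan is to obtain Corollary \ref{sfcol2} directly from Theorem \ref{sf theorem}. All hypotheses of that theorem except the extra condition are already assumed: $(W,\eth,s)$ is a complete super-metric space, $S$ is given, and $\Upsilon$ is an $S^F$-contraction. So it suffices to show that continuity of $\Upsilon$ forces, for all $u,v\in W$,
\[
\lim_{n\to\infty}\eth(\Upsilon^n u,v)=0 \quad\Longrightarrow\quad \lim_{n\to\infty}\eth(\Upsilon(\Upsilon^n u),\Upsilon v)=0 .
\]
Once this holds, Theorem \ref{sf theorem} applies verbatim and $\Upsilon$ is a Picard operator.

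To verify the implication, fix $u,v\in W$ and put $\xi_n=\Upsilon^n u$, so that $\eth(\xi_n,v)\to 0$. In a super-metric space, convergence of a sequence means exactly that the distance to the limit tends to $0$, as in part (i) of the definition of convergence quoted above. Continuity of $\Upsilon$ should therefore be read sequentially: whenever $\eth(\xi_n,v)\to0$, we have $\eth(\Upsilon\xi_n,\Upsilon v)\to 0$. Applying this to $\xi_n=\Upsilon^n u$ gives $\eth(\Upsilon^{n+1}u,\Upsilon v)\to 0$, which is precisely the required condition.

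The only delicate point is what ``continuous'' means in a super-metric space, because no topology has been fixed explicitly in the paper. I will adopt sequential continuity with respect to the notion of convergence defined earlier and state this convention explicitly at the start of the proof. If a topological definition were intended instead, I would note that it implies sequential continuity in the sense above, which is all the argument uses. After that, the argument is a one-line application of Theorem \ref{sf theorem}.
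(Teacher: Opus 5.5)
Your proposal is correct and matches the paper's proof of Corollary~\ref{sfcol2}, which consists only of the observation that continuity of $\Upsilon$ gives the hypothesis $\lim_{n\to\infty}\eth(\Upsilon^n u,v)=0 \Rightarrow \lim_{n\to\infty}\eth(\Upsilon(\Upsilon^n u),\Upsilon v)=0$ of Theorem~\ref{sf theorem}; reading continuity as sequential continuity with respect to $\eth$-convergence is the convention the paper uses tacitly. Your fallback remark about a topological definition is not needed, and it would itself require an argument because $\eth$ has no genuine triangle inequality, so you can simply adopt the sequential definition and drop that remark.
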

\begin{proof}
    Since continuity of $\Upsilon $ implies condition “for all $u,v \in W$, $\lim_{n \to \infty} \eth(\Upsilon^n u, v) = 0 
\quad \Longrightarrow \quad 
\lim_{n \to \infty} \eth(\Upsilon(\Upsilon^n u), \Upsilon  v) = 0$"
of Theorem \ref{sf theorem}.
\end{proof}
\begin{corollary}\cite{wardowski}
     Let $(W,\eth)$ be a complete metric space. If $\Upsilon :W\to W$ is an $F$-contraction, then $\Upsilon $ has a unique fixed point.
\end{corollary}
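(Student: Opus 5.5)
The plan is to obtain this as a specialization of Theorem \ref{sf theorem}, in the form of Corollary \ref{sfcol2}. The auxiliary map will be $S(\xi,\zeta)=\xi$. There are three things to check: a complete metric space is a complete super-metric space, an $F$-contraction is then an $S^F$-contraction, and the extra hypothesis of Theorem \ref{sf theorem} holds.

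\emph{Step 1: the space.} A metric space $(W,\eth)$ is a super-metric space with $s=1$. Given $\zeta\in W$, take any sequence $\xi_n$ and set $\zeta_n=\xi_n$, perturbing the index if the two sequences must be formally distinct. The triangle inequality gives
\[
\eth(\zeta_n,\zeta)\le \eth(\zeta_n,\xi_n)+\eth(\xi_n,\zeta),
\]
so whenever $\eth(\xi_n,\zeta_n)\to 0$ we get $\limsup\eth(\zeta_n,\zeta)\le\limsup\eth(\xi_n,\zeta)$. Cauchy sequences and convergence have the same meaning in both settings, so completeness carries over.

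\emph{Step 2: the contraction.} With $S(\xi,\zeta)=\xi$ we have
\[
\eth(\xi,S(\xi,\zeta))+\eth(\zeta,S(\xi,\zeta))=\eth(\xi,\zeta),
\]
and similarly $\eth(\Upsilon\xi,S(\Upsilon\xi,\Upsilon\zeta))+\eth(\Upsilon\zeta,S(\Upsilon\xi,\Upsilon\zeta))=\eth(\Upsilon\xi,\Upsilon\zeta)$. Under this substitution the hypothesis $\eth(\Upsilon\xi,\Upsilon\zeta)>0$ and inequality \eqref{sf contraction} become exactly Wardowski's $F$-contraction condition. So $\Upsilon$ is an $S^F$-contraction with the same $F$ and $\omega$.

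\emph{Step 3: the side condition.} This is the only point needing a small argument. An $F$-contraction is continuous: whenever $\eth(\Upsilon\xi,\Upsilon\zeta)>0$, (W1) and $\omega>0$ give $F(\eth(\Upsilon\xi,\Upsilon\zeta))<F(\eth(\xi,\zeta))$, hence $\eth(\Upsilon\xi,\Upsilon\zeta)<\eth(\xi,\zeta)$. Therefore $\eth(\Upsilon\xi,\Upsilon\zeta)\le\eth(\xi,\zeta)$ for all $\xi,\zeta$, so $\Upsilon$ is nonexpansive and in particular continuous. Consequently $\eth(\Upsilon^n u,v)\to0$ implies $\eth(\Upsilon(\Upsilon^n u),\Upsilon v)\le\eth(\Upsilon^n u,v)\to0$.

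Corollary \ref{sfcol2}, or directly Theorem \ref{sf theorem}, then shows that $\Upsilon$ is a Picard operator, and in particular has a unique fixed point. I expect no real obstacle. The only care needed is the monotonicity step in Step 3, which uses strictness from (W1) and $\omega>0$ rather than (W2) or (W3).
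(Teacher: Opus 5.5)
Your proposal is correct and is essentially the paper's own proof, which just sets $S(\xi,\zeta)=\xi$ (or $\zeta$) in Corollary \ref{sfcol2}. Your Steps 1 and 3 also make explicit what that one-line argument leaves implicit: a complete metric space is a complete super-metric space with $s=1$, and an $F$-contraction is nonexpansive, so the side condition of Theorem \ref{sf theorem} holds.

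One caution: your reduction is only as sound as Theorem \ref{sf theorem}, whose proof relies on Lemma \ref{convergent lemma}. That lemma's claim that asymptotic regularity alone forces the orbit to be Cauchy is false in general; for example, $\Upsilon\xi=\sqrt{\xi^2+1}$ on $[0,\infty)$ is asymptotically regular but its orbits diverge. A fully rigorous version should therefore use (W3), as Wardowski does, to obtain $\sum_n \eth(\xi_n,\xi_{n+1})<\infty$.
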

\begin{proof}
    Setting $S(\xi,\zeta)=\xi$ or $S(\xi,\zeta)=\zeta $ in Corollary \ref{sfcol2}.
\end{proof}
\section{\texorpdfstring{\textit{Bianchini $S^F$-contraction}: A generalization of \textit{$S^K$-contraction}}
{Bianchini SF-Contraction: A generalization of SK-Contraction}}

In this section, we introduce the concept of \textit{Kannan $S^F$-contraction} and \textit{Bianchini $S^F$-contraction} and prove that any mapping satisfying \textit{Bianchini $S^F$-contraction} in a complete super-metric space is a \textit{Picard operator} under certain condition.

\begin{definition}\label{ksf definition}
Let $(W,\eth,s)$ be a super-metric space, $F \in \mathcal{F}$, and let 
$S : W \times W \to W$ be a self-mapping. A mapping $\Upsilon  : W \to W$ 
is said to be a \emph{\textit{Kannan $S^F$-contraction}} if the following conditions hold:
\begin{enumerate}
    \item[(i)] 
    If 
    \[
    \eth(\Upsilon \xi,S(\Upsilon \xi,\Upsilon \zeta)) + \eth(\Upsilon \zeta,S(\Upsilon \xi,\Upsilon \zeta)) \neq 0,
    \]
    then
    \[
    \eth(\xi,S(\xi,\Upsilon \xi )) + \eth(\Upsilon \xi,S(\xi,\Upsilon \xi )) \neq 0
    \]
    or,
    \[
    \eth(\zeta ,S(\zeta,\Upsilon \zeta)) + \eth(\Upsilon \zeta,S(\zeta,\Upsilon \zeta)) \neq 0.
    \]

    \item[(ii)] 
    There exists $\omega   > 0$ such that
    \begin{equation}\label{ksf contraction}
    \begin{aligned}
    \omega   + &F(\eth(\Upsilon \xi, S(\Upsilon \xi,\Upsilon \zeta)) + \eth(\Upsilon \zeta, S(\Upsilon \xi,\Upsilon \zeta))) \\
    &\le 
    F\left(
    \frac{
    \eth(\xi,S(\xi,\Upsilon \xi )) + \eth(\Upsilon \xi,S(\xi,\Upsilon \xi )) 
    + \eth(\zeta ,S(\zeta,\Upsilon \zeta)) + \eth(\Upsilon \zeta,S(\zeta,\Upsilon \zeta))
    }{2}
    \right),
    \end{aligned}
    \end{equation}
    for all $\xi,\zeta \in W$.
\end{enumerate}
\end{definition}

\begin{definition}\label{bsf definition}
Let $(W,\eth,s)$ be a super-metric space, $F \in \mathcal{F}$, and let 
$S : W \times W \to W$ be a self-mapping. A mapping $\Upsilon  : W \to W$ 
is said to be a \emph{\textit{Bianchini $S^F$-contraction}} if the following conditions hold:
\begin{enumerate}
    \item[(i)] 
    If 
    \[
    \eth(\Upsilon \xi,S(\Upsilon \xi,\Upsilon \zeta)) + \eth(\Upsilon \zeta,S(\Upsilon \xi,\Upsilon \zeta)) \neq 0,
    \]
    then
    \[
    \eth(\xi,S(\xi,\Upsilon \xi )) + \eth(\Upsilon \xi,S(\xi,\Upsilon \xi )) \neq 0
    \]
    or,
    \[
    \eth(\zeta ,S(\zeta,\Upsilon \zeta)) + \eth(\Upsilon \zeta,S(\zeta,\Upsilon \zeta)) \neq 0.
    \]

    \item[(ii)] 
    There exists $\omega   > 0$ such that
    \begin{equation}\label{bsf contraction}
    \begin{aligned}
    \omega  & + F(\eth(\Upsilon \xi, S(\Upsilon \xi,\Upsilon \zeta)) + \eth(\Upsilon \zeta, S(\Upsilon \xi,\Upsilon \zeta))) \\
    &\le 
    F\left(
    \max\{
    \eth(\xi,S(\xi,\Upsilon \xi )) + \eth(\Upsilon \xi,S(\xi,\Upsilon \xi )) 
    , \eth(\zeta ,S(\zeta,\Upsilon \zeta)) + \eth(\Upsilon \zeta,S(\zeta,\Upsilon \zeta))\}\right),
    \end{aligned}
    \end{equation}
    for all $\xi,\zeta \in W$.
\end{enumerate}
\end{definition}

In the following example, we show that the the class of \textit{Kannan $S^F$-contraction} contains the class of \textit{$S^K$-contraction}, but not conversely.

\begin{example}
Let $W=\{\xi_n : n\in\mathbb{N}\}$ be defined by
\[
\xi_1=1, \qquad \xi_n=3^n,\quad n\ge2.
\]
Let $\eth$ be the Euclidean metric on $W$. Define $S(\xi,\zeta)=\max \{\xi ,\zeta\}, \xi,\zeta \in W$  and $\Upsilon :W\to W$ by
\[
\Upsilon(\xi_n)=
\begin{cases}
\xi_1, & n=1,\\
\xi_{n-1}, & n>1.
\end{cases}
\]

Observe that $\Upsilon \xi=\xi$ if and only if $\xi =\xi_1$ and thus, the condition (i) follows.

Also, for $\xi,\zeta \in W$, $\eth(\Upsilon \xi,S(\Upsilon \xi,\Upsilon \zeta)) + \eth(\Upsilon \zeta,S(\Upsilon \xi,\Upsilon \zeta)) \neq 0$ holds if and only if one of the following occurs:
\[
\xi =\xi_1,\ \zeta =\xi_n\ (n\ge3),
\quad \text{or}\quad
\xi =\xi_2,\ \zeta=\xi_n\ (n\ge3),
\quad \text{or}\quad
\xi =\xi_k,\ \zeta=\xi_r,\ k\ge3,\ r\ne k.
\]

 For simplicity, denote $A(\xi ,\zeta)=\eth(\xi,S(\xi,\Upsilon \xi ))+\eth(\zeta ,S(\zeta,\Upsilon \zeta)$ and $B(\xi ,\zeta)=\eth(\Upsilon \xi,S(\xi,\Upsilon \xi ))+\eth(\Upsilon \zeta,S(\zeta,\Upsilon \zeta)$. We examine three cases.

\medskip
\noindent
\textbf{Case (i)} $\xi =\xi_1$, $\zeta =\xi_n$, $n\ge3$. Then
\begin{equation*}
    \begin{split}
        A(\xi_1,\xi_n)=&\eth(\xi_1,\max \{\xi_1,\Upsilon \xi _1\})+\eth(\xi_n,\max \{\xi_n,\Upsilon \xi _n\})\\
        =& \eth(\xi_1,\xi_1)+\eth(3^n,3^n)\\
        =&0
    \end{split}
\end{equation*}
and 
\begin{equation*}
    \begin{split}
        B(\xi_1,\xi_n)=&\eth(\Upsilon \xi_1,\max \{\xi_1,\Upsilon \xi _1\})+\eth(\Upsilon \xi_n,\max \{\xi_n,\Upsilon \xi _n\})\\
        =&\eth(\xi_1,\xi_1)+\eth(3^{n-1},3^{n})\\
        =&3^n-3^{n-1}
    \end{split}
\end{equation*}
Now, 
\begin{equation}\label{8888}
\begin{split}
    &\frac{\eth(\Upsilon \xi_1,S(\Upsilon \xi_1,\Upsilon \xi _n))+\eth(\Upsilon \xi_n,S(\Upsilon \xi_1,\Upsilon \xi _n))}
     {A(\xi_1,\xi_n)+B(\xi_1,\xi_n)}\\
&= \frac{\eth(1,\max\{1,3^{n-1} \})+\eth(3^{n-1},\max\{1,3^{n-1} \})}{0+3^n-3^{n-1}}\\
&=\frac{3^{\,n-1}-1}{2\cdot 3^{\,n-1}}\\
&<\frac12.
\end{split}
\end{equation}

\medskip
\noindent
\textbf{Case (ii)} $\xi =\xi_2$, $\zeta =\xi_n$, $n\ge3$. Then
\begin{equation*}
    \begin{split}
        A(\xi_2,\xi_n)=&\eth(\xi_2,\max \{\xi_2,\Upsilon \xi _2\})+\eth(\xi_n,\max \{\xi_n,\Upsilon \xi _n\})\\
        =& \eth(\xi_2,\xi_2)+\eth(3^n,3^n)\\
        =&0
    \end{split}
\end{equation*}
and 
\begin{equation*}
    \begin{split}
        B(\xi_2,\xi_n)=&\eth(\Upsilon \xi_2,\max \{\xi_2,\Upsilon \xi _2\})+\eth(\Upsilon \xi_n,\max \{\xi_n,\Upsilon \xi _n\})\\
        =&\eth(\xi_1,\xi_2)+\eth(3^{n-1},3^{n})\\
        =&8+3^n-3^{n-1}.
    \end{split}
\end{equation*}
Now, 

\begin{equation}\label{999}
    \begin{split}
        &\frac{\eth(\Upsilon \xi_2,S(\Upsilon \xi_2,\Upsilon \xi _n))+\eth(\Upsilon \xi_n,S(\Upsilon \xi_2,\Upsilon \xi _n))}
     {A(\xi_2,\xi_n)+B(\xi_2,\xi_n)}\\
&=\frac{\eth(1,\max\{1,3^{n-1} \})+\eth(3^{n-1},\max\{1,3^{n-1} \})}{0+8+3^n-3^{n-1}}\\
&=
\frac{3^{\,n-1}-1}{8+2\cdot 3^{\,n-1}}\\
&<\frac12.
    \end{split}
\end{equation}

\medskip
\noindent
\textbf{Case (iii)} $\xi =\xi_k$, $\zeta =\xi_r$, $k\ge3$, $r\ne k$.

If $r=1$ or $r=2$, then \eqref{8888} and \eqref{999} apply respectively.
Assume $r\notin\{1,2,k\}$. Then
\begin{equation*}
    \begin{split}
        A(\xi_k,\xi_r)=&\eth(\xi_k,\max \{\xi_k,\Upsilon \xi _k\})+\eth(\xi_r,\max \{\xi_r,\Upsilon \xi_r\})\\
        =& \eth(\xi_k,\xi_k)+\eth(3^r,3^r)\\
        =&0
    \end{split}
\end{equation*}
and 
\begin{equation*}
    \begin{split}
        B(\xi_k,\xi_r)=&\eth(\Upsilon \xi_k,\max \{\xi_k,\Upsilon \xi _k\})+\eth(\Upsilon \xi_r,\max \{\xi_r,\Upsilon \xi _r\})\\
        =&\eth(\xi_{k-1},\xi_k)+\eth(3^{r-1},3^{r})\\
        =&3^k-3^{k-1}+3^r-3^{r-1}\\
        =& 2(3^{k-1}+3^{r-1}).
    \end{split}
\end{equation*}
Now, 

\begin{equation}\label{101010}
    \begin{split}
        &\frac{\eth(\Upsilon \xi_k,S(\Upsilon \xi_k,\Upsilon \xi_r))+\eth(\Upsilon \xi_r,S(\Upsilon \xi_k,\Upsilon \xi _r))}
     {A(\xi_k,\xi_r)+B(\xi_k,\xi_r)}\\
&= \frac{\eth(3^{k-1},\max\{3^{k-1},3^{r-1} \})+\eth(3^{r-1},\max\{3^{k-1},3^{r-1} \})}{0+2(3^{k-1}+3^{r-1})}\\
&=
\frac{|3^{k-1}-3^{r-1}|}
     {2(3^{k-1}+3^{r-1})}\\
&<\frac12.
    \end{split}
\end{equation}

From \eqref{8888}, \eqref{999}, and \eqref{101010}, we obtain
\[\frac{\eth(\Upsilon \xi,S(\Upsilon \xi,\Upsilon \zeta))+\eth(\Upsilon \zeta,S(\Upsilon \xi,\Upsilon \zeta))}{\eth(\xi,S(\xi,\Upsilon \xi ))+\eth(\Upsilon \xi,S(\xi,\Upsilon \xi ))+\eth(\zeta ,S(\zeta,\Upsilon \zeta))+\eth(\Upsilon \zeta,S(\zeta,\Upsilon \zeta))} < \frac12\]

for all $\xi,\zeta \in W$ with $\Upsilon \xi\ne \Upsilon \zeta$.

Moreover, if $\xi =\xi_1$ and $\zeta =\xi_n$, then
\[
\frac{\eth(\Upsilon \xi_1,S(\Upsilon \xi_1,\Upsilon \xi _n))+\eth(\Upsilon \xi_n,S(\Upsilon \xi_1,\Upsilon \xi_n))}
     {A(\xi_1,\xi_n)+B(\xi_1,\xi_n)}
\to \frac12
\quad \text{as } n\to\infty.
\]
So, $\Upsilon $ is not \textit{$S^K$-contraction}.

\medskip
Finally, for each $\xi,\zeta \in W$ with $\Upsilon \xi\ne \Upsilon \zeta$, consider
\begin{align*}
\frac{\eth(\Upsilon \xi,S(\Upsilon \xi,\Upsilon \zeta))+\eth(\Upsilon \zeta,S(\Upsilon \xi,\Upsilon \zeta))}{\frac{A(\xi ,\zeta)+B(\xi ,\zeta)}{2}}
\cdot&\frac{e^{\left(
\eth(\Upsilon \xi,S(\Upsilon \xi,\Upsilon \zeta))+\eth(\Upsilon \zeta,S(\Upsilon \xi,\Upsilon \zeta))
\right)}}{e^{\frac{A(\xi ,\zeta)+B(\xi ,\zeta)}{2}}}\\
<&\frac{e^{\left(
\eth(\Upsilon \xi,S(\Upsilon \xi,\Upsilon \zeta))+\eth(\Upsilon \zeta,S(\Upsilon \xi,\Upsilon \zeta))
\right)}}{e^{\frac{A(\xi ,\zeta)+B(\xi ,\zeta)}{2}}}\\
=&\begin{cases}
\frac{1}{e}, & \text{by \eqref{8888}},\\
\frac{1}{e^5}, & \text{by \eqref{999}},\\
\frac{1}{e^{2\cdot 3^{r-1}}} \ \text{or}\ \frac{1}{e^{2\cdot 3^{k-1}}}, & \text{by \eqref{101010}}
\end{cases}\\
<& e^{-1/2}.
\end{align*}

Therefore, $\Upsilon $ is \textit{Kannan $S^F$-contraction} with $\omega  =1/2$.
\end{example}

In the following example, we show that the the class of \textit{Bianchini $S^F$-contraction} contains the class of \textit{Kannan $S^F$-contraction}, but not conversely.
\begin{example}
    Let $W=[0,1]$ and $\eth$ be the Euclidean metric on $W$. Define $S(\xi,\zeta)=\min \{\xi ,\zeta\}, \xi,\zeta \in W$  and $\Upsilon :W\to W$ by
\[
\Upsilon(\xi)=
\begin{cases}
0.95, & \xi \in [0,1),\\
0.7, & \xi =1.
\end{cases}
\]
For simplicity, denote $C(\xi ,\zeta)=\eth(\xi,S(\xi,\Upsilon \xi ))+\eth(\Upsilon \xi,S(\xi,\Upsilon \xi ))$ and $D(\xi ,\zeta)=\eth(\zeta ,S(\zeta,\Upsilon \zeta)+\eth(\Upsilon \zeta,S(\zeta,\Upsilon \zeta)$.

Observe that $\Upsilon \xi=\xi$ if and only if $\xi =0.95$ and thus, the condition (i) follows.

First we show that $\Upsilon $ is not \textit{Kannan $S^F$-contraction}. Indeed, taking $\xi =0.8$ and $\zeta =1$
\begin{align*}
    A(0.8,1)=&\eth(0.8,S(0.8,\Upsilon  0.8))+\eth(1,\eth(1,\Upsilon  1))\\
    =& \eth(0.8,S(0.8,0.95))+\eth(1,\eth(1,0.7))\\
    =& \eth(0.8,0.8)+\eth(1,0.7)\\
    =& 0.3
\end{align*}
and 
\begin{align*}
    B(0.8,1)=&\eth(\Upsilon 0.8,S(0.8,\Upsilon  0.8))+\eth(\Upsilon 1,\eth(1,\Upsilon  1))\\
    =& \eth(0.95,S(0.8,0.95))+\eth(0.7,\eth(1,0.7))\\
    =& \eth(0.95,0.8)+\eth(0.7,0.7)\\
    =& 0.15.
\end{align*}
Then,
\begin{align*}
    F(\eth(\Upsilon 0.8,S(\Upsilon 0.8,\Upsilon  1))+\eth(\Upsilon 1,S(\Upsilon 0.8,\Upsilon  1)))=& F(\eth(0.95,S(0.95,0.7))+\eth(0.7,S(0.95,0.7)))\\
    =& F(\eth(0.95,0.7)+\eth(0.7,0.7))\\
    =& F(0.25)\\
    >& F(0.225)\\
    =& F\left(\frac{0.3+0.15}{2}\right)\\
    =& F\left(\frac{A(0.8 ,1 )+B(0.8 ,1 )}{2}\right)
\end{align*}
So, $\Upsilon $ is not \textit{Kannan $S^F$-contraction} since we cannot find any $\omega  >0$ satisfying \eqref{ksf contraction} for $\xi =0.8$ and $\zeta =1$.

On the other hand, for $\xi \in [0,1)$ and $\zeta =1$
\begin{align*}
    C(\xi ,\zeta)=&\eth(\xi,S(\xi,\Upsilon \xi ))+\eth(\Upsilon \xi,S(\xi,\Upsilon \xi ))\\
    =&\eth(\xi,S(\xi,0.95))+\eth(0.95,S(\xi,0.95))\\
    =&\eth(\xi,\min\{\xi ,0.95\})+\eth(0.95,\min\{\xi ,0.95\})\\
    =& \begin{cases}
       |\xi -0.95|\ge 0, \text{ when } 0.95\le \xi< 1\\
       |\xi -0.95|> 0, \text{ when } 0.65\le \xi < 0.95\\
        |\xi -0.95|\ge 0.3, \text{ when } 0\le \xi < 0.65
    \end{cases}\\
\ge& \min\{0,0,0.3\}\\
=& 0,
\end{align*}
and
\begin{align*}
    D(\xi ,\zeta)=&\eth(1,S(1,\Upsilon  1))+\eth(\Upsilon 1,S(1,\Upsilon  1))\\
    =&\eth(1,S(1,0.7))+\eth(0.7,S(1,0.7))\\
    =&\eth(1,0.7)+\eth(0.7,0.7)\\
    =&0.3.
\end{align*}
Then,
\begin{align*}
F\left(\eth(\Upsilon \xi,S(\Upsilon \xi,\Upsilon \zeta))+\eth(\Upsilon \zeta,S(\Upsilon \xi,\Upsilon \zeta))\right)=&F\left(\eth(0.95,S(0.95,0.7))+\eth(0.7,S(0.95,0.7))\right)\\
=&F\left(\eth(0.95,0.7)+\eth(0.7,0.7)\right)\\
=&F(0.25).
\end{align*}
Now, 
 \begin{align*}
   F\left(\max\{C(\xi ,\zeta),D(\xi ,\zeta)\}\right)- &F\left(\eth(\Upsilon \xi,S(\Upsilon \xi,\Upsilon \zeta))+\eth(\Upsilon \zeta,S(\Upsilon \xi,\Upsilon \zeta))\right)\\
   \ge& F\left(\max\{0,0.3\}\right)-F\left(0.25\right)\\
   =& F(0.3)-F(0.25).
 \end{align*}
\end{example}
 So, $\Upsilon $ is \textit{Bianchini $S^F$-contraction} with $\omega  =F(0.3)-F(0.25)>0$

\begin{lemma}\label{asym lemma2}
    Let $(W,\eth,s)$ be a super-metric space and $\Upsilon $ be a \textit{Bianchini $S^F$-contraction}. Then, $\Upsilon $ is asymptotically regular.
\end{lemma}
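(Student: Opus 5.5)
Fix $\xi_0\in W$ and set $\xi_n=\Upsilon\xi_{n-1}$. If $\xi_{n_0+1}=\xi_{n_0}$ for some $n_0$, the sequence is eventually constant and $\eth(\xi_n,\xi_{n+1})=0$ from $n_0$ on, so there is nothing to prove. Otherwise, we mimic Lemma \ref{asym lemma}, but with the quantity
\[
\gamma_n=\eth(\xi_n,S(\xi_n,\xi_{n+1}))+\eth(\xi_{n+1},S(\xi_n,\xi_{n+1})),\qquad n\ge 0,
\]
which is exactly the expression appearing in the Bianchini terms of \eqref{bsf contraction} when $\xi=\xi_n$ (since $\Upsilon\xi_n=\xi_{n+1}$).

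\textbf{Step 1: apply \eqref{bsf contraction} to consecutive iterates.} Put $(\xi,\zeta)=(\xi_{n-1},\xi_n)$. The left-hand side involves
\[
\eth(\xi_n,S(\xi_n,\xi_{n+1}))+\eth(\xi_{n+1},S(\xi_n,\xi_{n+1}))=\gamma_n .
\]
The right-hand side is $F(\max\{\gamma_{n-1},\gamma_n\})$. Whenever $\gamma_n>0$ we therefore get
\[
\omega+F(\gamma_n)\le F(\max\{\gamma_{n-1},\gamma_n\}).
\]
Condition (i) guarantees that the maximum is positive, so $F$ is defined there. If $\gamma_n=0$ for some $n$, then $\gamma_m=0$ for all $m\ge n$, because positivity of $\gamma_m$ would give $\omega+F(\gamma_m)\le F(\max\{0,\gamma_m\})=F(\gamma_m)$, which is absurd. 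In that case the sequence $(\gamma_n)$ trivially tends to $0$.

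\textbf{Step 2: eliminate the maximum.} This is the only genuinely new point compared with Lemma \ref{asym lemma}. If $\max\{\gamma_{n-1},\gamma_n\}=\gamma_n>0$, Step 1 gives $\omega+F(\gamma_n)\le F(\gamma_n)$, contradicting $\omega>0$. Hence $\gamma_n<\gamma_{n-1}$ and $F(\gamma_n)\le F(\gamma_{n-1})-\omega$. Iterating yields
\[
F(\gamma_n)\le F(\gamma_0)-n\omega\to-\infty,
\]
so (W2) gives $\gamma_n\to 0$.

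\textbf{Step 3: pass from $\gamma_n$ to $\eth(\xi_n,\xi_{n+1})$.} Both summands of $\gamma_n$ tend to $0$, that is,
\[
\eth(\xi_n,S(\xi_n,\xi_{n+1}))\to 0 \quad\text{and}\quad \eth(\xi_{n+1},S(\xi_n,\xi_{n+1}))\to 0 .
\]
We then use the super-metric triangle inequality exactly as at the end of the proof of Lemma \ref{asym lemma}, with $S(\xi_n,\xi_{n+1})$ as the intermediate point, to obtain
\[
\limsup_{n\to\infty}\eth(\xi_n,\xi_{n+1})\le s\limsup_{n\to\infty}\eth(S(\xi_n,\xi_{n+1}),\xi_{n+1})=0 .
\]
Hence $\eth(\Upsilon^n\xi_0,\Upsilon^{n+1}\xi_0)\to 0$ and $\Upsilon$ is asymptotically regular.

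The main obstacle is Step 2, namely ruling out the case in which the maximum is attained at the current term. Strict monotonicity (W1) together with $\omega>0$ handles it. The vanishing cases are covered by the remark in Step 1 and by condition (i).
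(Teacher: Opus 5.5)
Your Steps 1 and 2 are the paper's argument: the paper substitutes $(\xi_n,\xi_{n+1})$ where you substitute $(\xi_{n-1},\xi_n)$. You carry them out more carefully than the paper does. Note also that $\gamma_n=0$ already forces $\xi_n=S(\xi_n,\xi_{n+1})=\xi_{n+1}$ by axiom 1, so once $\xi_{n+1}\neq\xi_n$ for all $n$, every $\gamma_n$ is positive and condition (i) plays no role. So $\gamma_n\to 0$ is correctly established.

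The genuine gap is Step 3. It is the same gap as in the paper, which also just appeals to the end of the proof of Lemma \ref{asym lemma}. Axiom 3 of Definition \ref{sms}, even read as a statement about \emph{all} pairs of sequences with $\eth(\xi_n,\zeta_n)\to 0$, only compares $\limsup_n \eth(\zeta_n,\zeta)$ with $\limsup_n\eth(\xi_n,\zeta)$ for a \emph{fixed} point $\zeta$. In Step 3 the reference point is $\xi_{n+1}$, which moves with $n$, so the axiom gives nothing. Without a triangle inequality, $\eth(\xi_n,z_n)\to 0$ and $\eth(z_n,\xi_{n+1})\to 0$ do not imply $\eth(\xi_n,\xi_{n+1})\to 0$.

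This cannot be patched, because the lemma itself fails. Here is a counterexample.
\begin{itemize}
\item The space: let $W=\{x_n:n\ge 0\}\cup\{w_{n,k}:0\le n<k\}$, with $\eth(x_n,w_{n,k})=\eth(x_k,w_{n,k})=2^{-n-2}$, $\eth(u,u)=0$, and $\eth(u,v)=1$ for all other $u\neq v$.
\item Axiom 3 and completeness: if $\eth(a_j,b_j)\to 0$, then for large $j$ either $a_j=b_j$, or $\{a_j,b_j\}$ is $\{x_n,w_{n,k}\}$ or $\{x_k,w_{n,k}\}$ with $n=n_j\to\infty$. Hence for each fixed $\zeta$, eventually $\eth(a_j,\zeta)=\eth(b_j,\zeta)$, so axiom 3 holds with $s=1$. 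Cauchy sequences are eventually constant, so $W$ is complete.
\item The maps: put $\Upsilon x_n=\Upsilon w_{n,k}=x_{n+1}$, $S(u,u)=u$, $S(x_a,x_b)=w_{\min\{a,b\},\max\{a,b\}}$ for $a\neq b$, $S(w_{n,k},x_{n+1})=x_{k+1}$, and $F=\ln$.
\item The contraction: the left-hand quantity in \eqref{bsf contraction} is $2^{-\min\{a,b\}-1}\le\frac14$ if $\Upsilon\xi=x_a\neq x_b=\Upsilon\zeta$, and $0$ otherwise. The term $\eth(\xi,S(\xi,\Upsilon\xi))+\eth(\Upsilon\xi,S(\xi,\Upsilon\xi))$ equals $2^{-n-1}$ at $\xi=x_n$ and $2$ at $\xi=w_{n,k}$. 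So condition (i) is trivial and \eqref{bsf contraction} holds with $\omega=\ln 2$.
\item The failure: along the orbit of $x_0$ you get $\gamma_n=2^{-n-1}\to 0$, exactly as your Steps 1--2 predict, yet $\eth(x_n,x_{n+1})=1$ for every $n$.
\end{itemize}

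To close the argument you need an extra hypothesis. One option is a genuine triangle-type inequality $\eth(\xi,\zeta)\le s\,[\eth(\xi,z)+\eth(z,\zeta)]$ (the metric or $b$-metric case), under which Step 3 is immediate. Another is $S(\xi,\zeta)\in\{\xi,\zeta\}$, under which $\gamma_n=\eth(\xi_n,\xi_{n+1})$ directly.
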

\begin{proof}
    Let $\xi_0 \in W$ be arbitrary and define a sequence $\{\xi_n\}$ in $W$ by $\xi_n = \Upsilon \xi_{n-1}, \quad n \in \{1,2,\ldots\}$.

Now suppose that $\xi_{n+1} \neq \xi_n$ for $n \in \{0,1,\ldots\}$ and let $\lambda_{n+1} = \eth(\xi_{n+1}, S(\xi_{n+1}, \xi_{n+2}))$, $ \eta_{n+1} = \eth(\xi_{n+2}, S(\xi_{n+1}, \xi_{n+2})) \text{ for } n \in \{0,1,\ldots\}$.

Then $\lambda_n > 0$ for all $n \in \{0,1,\ldots\}$.

Now, setting $(\xi ,\zeta)=(\xi_n,\xi_{n+1})$ in \eqref{bsf contraction}, we have
\begin{equation}\label{bf2}
\begin{split}
    F(\lambda_{n+1}+\eta_{n+1})=&F(\eth(\Upsilon \xi_{n},S(\Upsilon \xi_{n},\Upsilon \xi _{n+1}))+\eth(\Upsilon \xi_{n+1},S(\Upsilon \xi_{n},\Upsilon \xi_{n+1})))\\
    \le& F(\max \left\{C(\xi_n,\xi_{n+1}), D(\xi_n,\xi_{n+1})\right\})-\omega  \\
    =& F(\max\{\lambda_{n}+\eta_{n}, \lambda_{n+1}+\eta_{n+1}\})-\omega  .
    \end{split}
\end{equation}
Suppose $\{\lambda_{n}+\eta_{n}\}$ is an increasing sequence in $W$, then inequality \eqref{bf2} becomes $0\le-\omega  $. This is a contradiction. Hence, $\{\lambda_{n}+\eta_{n}\}$ is a strictly decreasing sequence.

Now, from \eqref{bf2} \begin{equation}
    \begin{split}
        F(\lambda_{n+1}+\eta_{n+1})\le& F(\lambda_{n}+\eta_{n})-\omega  \\
        \le& F(\lambda_{n-1}+\eta_{n-1})-2\omega  \\
    &\vdots\\
    \le& F(\lambda_{0}+\eta_{0})-n\omega  .
    \end{split}
\end{equation}
Following the proof of Lemma \ref{asym lemma}, we can prove $\lim_{n \to \infty}\eth(\xi_n,\xi_{n+1})=0$. Hence, $\Upsilon $ is asymptotically regular.
\end{proof}

\begin{figure}[h!]
\centering
\begin{tikzpicture}[
    scale=0.9, 
    transform shape,
    >=latex,
    box/.style={
        draw,
        rectangle,
        align=center,
        minimum width=3cm,
        minimum height=1cm
    }
]

\node[box] (bianchini) at (0,5.7)
    {Bianchini\\$s^{F}$-contraction};

\node[box] (kannans) at (0,3.5)
    {Kannan\\$s^{F}$-contraction};

\node[box] (kannant) at (-4,1.8)
    {Kannan\\$F$-contraction};

\node[box] (sk) at (4,1.8)
    {$S^{K}$-contraction};

\node[box] (kannan) at (0,0)
    {Kannan\\contraction};

\draw[->, thick] (kannan) -- (kannant);
\draw[->, thick] (kannan) -- (sk);

\draw[->, thick] (kannant) -- (kannans);
\draw[->, thick] (sk) -- (kannans);

\draw[->, thick] (kannans) -- (bianchini);

\end{tikzpicture}
\caption{Hierarchy of \textit{Kannan contraction} and its generalizations}
\label{fig:contraction-hierarchy}
\end{figure}
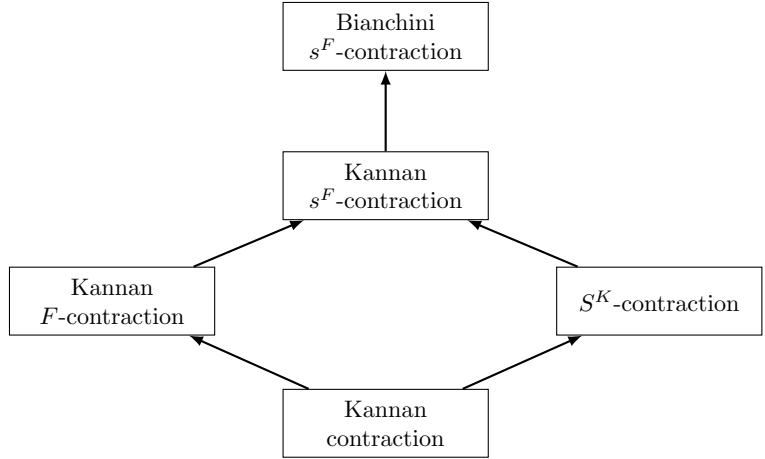

\begin{theorem}\label{bsf theorem}
      Let $(W,\eth,s)$ be a complete super-metric space and $S:W\times W \to W$ be a given mapping  with $S(\xi,\xi)=\xi$. 
If $\Upsilon  : W \to W$ is an \textit{Bianchini $S^F$-contraction}, then $\Upsilon $ is a \textit{Picard operator} provided that, for all $u,v \in W$,
\[
\lim_{n \to \infty} \eth(\Upsilon^n u, v) = 0 
\quad \Longrightarrow \quad 
\lim_{n \to \infty} \eth(\Upsilon(\Upsilon^n u), \Upsilon  v) = 0.
\]
\end{theorem}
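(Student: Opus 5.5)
The plan mirrors the proof of Theorem~\ref{sf theorem}, with Lemma~\ref{asym lemma2} in place of Lemma~\ref{asym lemma}. Fix $\xi_0\in W$ and set $\xi_n=\Upsilon\xi_{n-1}$. If $\xi_{n_0+1}=\xi_{n_0}$ for some $n_0$, then $\xi_{n_0}$ is a fixed point, so we may assume $\xi_{n+1}\neq\xi_n$ for all $n$. By Lemma~\ref{asym lemma2}, $\Upsilon$ is asymptotically regular. Lemma~\ref{convergent lemma} then shows that $\{\Upsilon^n\xi_0\}$ converges to some $\xi^*\in W$, i.e. $\eth(\Upsilon^n\xi_0,\xi^*)\to 0$.

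Applying the hypothesis with $u=\xi_0$, $v=\xi^*$ gives $\eth(\Upsilon^{n+1}\xi_0,\Upsilon\xi^*)\to 0$. Since $\{\Upsilon^{n+1}\xi_0\}$ is a tail of $\{\Upsilon^n\xi_0\}$, it also converges to $\xi^*$. Uniqueness of limits in a super-metric space (the proposition from \cite{super metric1}) then yields $\Upsilon\xi^*=\xi^*$.

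For uniqueness, suppose $u\neq v$ are both fixed points. The assumption $S(\xi,\xi)=\xi$ is used exactly here. Since $\Upsilon u=u$, we get
\[
\eth(u,S(u,\Upsilon u))+\eth(\Upsilon u,S(u,\Upsilon u))=2\eth(u,u)=0,
\]
and the same holds at $v$. Hence the right-hand side of \eqref{bsf contraction} is $F(0)$, which is undefined. We therefore use condition (i) of Definition~\ref{bsf definition} instead. Both quantities in its conclusion vanish, so the contrapositive of (i) forces
\[
\eth(u,S(u,v))+\eth(v,S(u,v))=0.
\]
Thus $\eth(u,S(u,v))=0$ and $\eth(v,S(u,v))=0$, so $u=S(u,v)=v$, a contradiction. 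Hence $\mathrm{Fix}(\Upsilon)=\{\xi^*\}$. Since $\xi_0$ was arbitrary, every Picard sequence converges to $\xi^*$, and $\Upsilon$ is a Picard operator.

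The main obstacle is the uniqueness step. It must go through condition (i) together with $S(\xi,\xi)=\xi$, not through the $F$-inequality, because the max term is zero at fixed points and $F$ is defined only on $(0,\infty)$. A smaller point is the reliance on Lemma~\ref{asym lemma2}: its monotonicity argument implicitly needs $\lambda_n+\eta_n>0$ for $F$ to be defined. I would check this: if $\lambda_n+\eta_n=0$ then $\xi_n=S(\xi_n,\xi_{n+1})=\xi_{n+1}$, which is excluded by the assumption $\xi_{n+1}\neq\xi_n$. So the lemma applies in the nontrivial case.
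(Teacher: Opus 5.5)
Your proof follows the paper's route step for step: the Picard sequence, Lemma~\ref{asym lemma2}, Lemma~\ref{convergent lemma}, and then the limit hypothesis to get a fixed point. The one real difference is the uniqueness step. The paper substitutes $u,v$ into \eqref{bsf contraction} and ends with $F(0)$, which is meaningless because $F$ is only defined on $(0,\infty)$. You instead use the contrapositive of condition (i) of Definition~\ref{bsf definition}, together with $\Upsilon u=u$, $\Upsilon v=v$ and $S(\xi,\xi)=\xi$. That reaches $u=S(u,v)=v$ legitimately, and it is exactly what the paper's computation is trying to express. Two further refinements are also correct and make explicit what the paper leaves implicit: your appeal to uniqueness of limits to get $\Upsilon\xi^*=\xi^*$, and your check that $\lambda_n+\eta_n>0$ in Lemma~\ref{asym lemma2}.

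One caveat is inherited from the paper rather than introduced by you, but since you were auditing the cited lemmas, it is the one that actually needs repair: Lemma~\ref{convergent lemma} is false as stated.
\begin{itemize}
\item Its induction only gives $\eth(\xi_n,\xi_{n+k})\to0$ for each fixed $k$, which is weaker than the Cauchy condition.
\item Asymptotic regularity alone does not force convergence. On $[1,\infty)$ with the usual metric, $\Upsilon x=\sqrt{x^2+1}$ is asymptotically regular, yet $\Upsilon^n 1=\sqrt{n+1}\to\infty$.
\end{itemize}
For a Bianchini $S^F$-contraction, the Cauchy property should come from the contraction itself. Apply \eqref{bsf contraction} to the pair $(\xi_n,\xi_m)$ and use that $F$ is strictly increasing. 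This gives
\[
\eth(\xi_{n+1},z_{n,m})+\eth(\xi_{m+1},z_{n,m})\le\max\{\lambda_n+\eta_n,\lambda_m+\eta_m\}\longrightarrow 0,\qquad z_{n,m}=S(\xi_{n+1},\xi_{m+1}),
\]
where $\lambda_n+\eta_n=\eth(\xi_n,S(\xi_n,\xi_{n+1}))+\eth(\xi_{n+1},S(\xi_n,\xi_{n+1}))\to0$ by Lemma~\ref{asym lemma2}. In a metric space, the triangle inequality through $z_{n,m}$ then yields the Cauchy property at once. In a genuine super-metric space this step needs a further argument, because the super-metric axiom only controls distances to a fixed third point. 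The same objection applies to the last step of Lemma~\ref{asym lemma}.
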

\begin{proof}
     Let $\xi_0 \in W$ be arbitrary and define a sequence $\{\xi_n\}$ in $W$ by $\xi_n = \Upsilon \xi _{n-1}, \quad n \in \{1,2,\ldots\}$.

If $\xi_{n_0+1} = \xi_{n_0}$ for some $n_0 \in \{0,1,\ldots\}$, then $\Upsilon \xi_{n_0} = \xi_{n_0}$, and so $\Upsilon $ possesses a fixed point.

Now suppose that $\xi_{n+1} \neq \xi_n$ for $n \in \{0,1,\ldots\}$
    Since $\Upsilon : W \to W$ is an \textit{$S^F$-contraction} in super-metric space $(W,\eth,s)$, by Lemma \ref{asym lemma2}, $\Upsilon $ is asymptotically regular in $W$. By using Lemma \ref{convergent lemma}, $\Upsilon^n \xi_0$ is convergent, i.e. 
  there exists some $\xi^* \in W$ such that \begin{equation}\label{bf7}
    \lim_{n \to \infty} \eth(\Upsilon^n\xi_0,\xi^*)=0,
\end{equation} 
which implies  \begin{equation}\label{bf77}
    \lim_{n \to \infty} \eth(\Upsilon(\Upsilon^n\xi_0),\Upsilon \xi ^*)=0.
\end{equation} 
From \eqref{bf7} and \eqref{bf77}, we can conclude that  $\xi^*$ is the fixed point of $\Upsilon $.

Suppose there exist two fixed points $u,v$ such that $u\ne v$. Now using \eqref{bsf contraction}  with $S(\xi,\xi)=\xi$, we have
\begin{align*}
   F\left(\eth(u,S(u,v))+\eth(v,S(u,v))\right)<& \omega  + F\left(\eth(u,S(u,v))+\eth(v,S(u,v))\right)\\
   \le& F\left(\max\{\eth(u,u)+\eth(u,u),\eth(v,v)+\eth(v,v)\}\right)\\
   =& F(0).
\end{align*}
Since $F$ is strictly increasing, the above inequality implies that 
\begin{equation*}
    \eth(u,S(u,v))+\eth(v,S(u,v))=0,
\end{equation*}
i.e. \begin{equation}\label{bf8}
    u=S(u,v)=v.
\end{equation} 
So, $\Upsilon $ has a unique fixed point.
\end{proof}
\begin{example}
Let $W=\{1,2,3,4\}$ and let $m:W\times W\to [0,+\infty)$ be defined by
\[
\eth(\xi ,\zeta)=\eth(\zeta,\xi)=|\xi-\zeta|^2,\quad \text{for } \xi,\zeta \in\{2,3,4\},
\]
and
\[
\eth(1,\xi)=\eth(\xi ,1)=(1-\xi^3)^2,\quad \text{for } \xi \in W.
\]
Then $(W,\eth)$ is a super-metric space with coefficient $s=2$.

Define $\Upsilon :W\to W$ by
\[
\Upsilon  1=\Upsilon  4=2,\qquad \Upsilon  2=\Upsilon  3=3,
\]
and define $S:W\times W\to W$ by
\[
S(\xi,\zeta)=
\begin{cases}
1, & \xi \ne \zeta,\\
\xi, & \xi =\zeta .
\end{cases}
\]

For simplicity, denote
\[
C(\xi ,\zeta)=\eth(\xi,S(\xi,\Upsilon \xi ))+\eth(\Upsilon \xi,S(\xi,\Upsilon \xi ))
\]
and
\[
D(\xi ,\zeta)=\eth(\zeta ,S(\zeta,\Upsilon \zeta))+\eth(\Upsilon \zeta,S(\zeta,\Upsilon \zeta)).
\]

Also, for $\xi,\zeta \in W$, $\eth(\Upsilon \xi,S(\Upsilon \xi,\Upsilon \zeta)) + \eth(\Upsilon \zeta,S(\Upsilon \xi,\Upsilon \zeta)) \neq 0$ holds if and only if the following pairs occur:
$(1,2),(1,3),(4,2),(4,3)$.

\medskip

Consider $\xi =4$ and $\zeta =3$.

\begin{align*}
C(4,3)=&\,\eth(4,S(4,\Upsilon  4))+\eth(\Upsilon 4,S(4,\Upsilon  4))\\
=&\,\eth(4,S(4,2))+\eth(2,S(4,2))\\
=&\,\eth(4,1)+\eth(2,1)\\
=&\,(1-4^3)^2+(1-2^3)^2\\
=&\,63^2+7^2\\
=&\,3969+49\\
=&\,4018,
\end{align*}

\begin{align*}
D(4,3)=&\,\eth(3,S(3,\Upsilon  3))+\eth(\Upsilon 3,S(3,\Upsilon  3))\\
=&\,\eth(3,S(3,3))+\eth(3,S(3,3))\\
=&\,\eth(3,3)+\eth(3,3)\\
=&\,0.
\end{align*}

Hence,
\[
\max\{C(4,3),D(4,3)\}=4018.
\]

\medskip

Next,
\begin{align*}
&\eth(\Upsilon 4,S(\Upsilon 4,\Upsilon  3))+\eth(\Upsilon 3,S(\Upsilon 4,\Upsilon  3))\\
=&\,\eth(2,S(2,3))+\eth(3,S(2,3))\\
=&\,\eth(2,1)+\eth(3,1)\\
=&\,49+676\\
=&\,725.
\end{align*}

Therefore,
\begin{align*}
F\left(\max\{C(4,3),D(4,3)\}\right)
- F\left(\eth(\Upsilon 4,S(\Upsilon 4,\Upsilon  3))+\eth(\Upsilon 3,S(\Upsilon 4,\Upsilon  3))\right)
=&\,F(4018)-F(725).
\end{align*}

Since $4018>725$ and $F\in\mathcal F$ is strictly increasing,
\[
F(4018)-F(725)>0.
\]

Thus, for
\[
\omega  =F(4018)-F(725)>0,
\]
the inequality
\[
\omega  +F\left(\eth(\Upsilon \xi,S(\Upsilon \xi,\Upsilon \zeta))+\eth(\Upsilon \zeta,S(\Upsilon \xi,\Upsilon \zeta))\right)
\le
F\left(\max\{C(\xi ,\zeta),D(\xi ,\zeta)\}\right).
\]
One can easily prove for the remaining pairs. Hence, $\Upsilon $ is a \textit{Bianchini $S^F$-contraction} on $(W,\eth,s)$.

Also, $\Upsilon^n u=3$ for all $u\in W$, i.e. $\lim_{n\to\infty}\eth(\Upsilon^n u,3)=0$.

And $\lim_{n\to\infty}\eth(\Upsilon(\Upsilon^n u),\Upsilon  3)=\lim_{n\to\infty}\eth(\Upsilon^{n+1} u,3)=0$
\end{example}
 
\subsection{Corollaries}
 \begin{corollary}\label{bsfcol11}
         Let $(W,\eth)$ be a complete metric space and $S:W\times W \to W$ be a given mapping  with $S(\xi,\xi)=\xi$. 
If $\Upsilon  : W \to W$ is an \textit{Kannan $S^F$-contraction}, then $\Upsilon $ is a \textit{Picard operator} provided that, for all $u,v \in W$,
\[
\lim_{n \to \infty} \eth(\Upsilon^n u, v) = 0 
\quad \Longrightarrow \quad 
\lim_{n \to \infty} \eth(\Upsilon(\Upsilon^n u), \Upsilon  v) = 0.
\]
\end{corollary}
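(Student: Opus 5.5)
The plan is to reduce Corollary \ref{bsfcol11} to Theorem \ref{bsf theorem}. Two facts are needed: every complete metric space is a complete super-metric space, and every \emph{Kannan $S^F$-contraction} is a \emph{Bianchini $S^F$-contraction} with the same $F$ and $\omega$. Once both hold, the hypotheses of Theorem \ref{bsf theorem} are met: $S(\xi,\xi)=\xi$ and the orbital condition are assumed verbatim. The conclusion that $\Upsilon$ is a Picard operator then follows at once.

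\textbf{Step 1: from metric to super-metric.} Let $(W,\eth)$ be a metric space. Axioms 1 and 2 of Definition \ref{sms} are immediate. For Axiom 3, take $s=1$. For a given $\zeta\in W$, choose sequences $(\xi_n)$ and $(\zeta_n)$ with $\eth(\xi_n,\zeta_n)\to 0$, for instance $\zeta_n=\xi_n$, or a genuinely distinct pair if distinctness is insisted on. The triangle inequality gives
\[
\eth(\zeta_n,\zeta)\le \eth(\zeta_n,\xi_n)+\eth(\xi_n,\zeta).
\]
Taking $\limsup$ yields $\limsup_n\eth(\zeta_n,\zeta)\le \limsup_n\eth(\xi_n,\zeta)$. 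The convergence and Cauchy notions of the super-metric definitions coincide with the usual metric ones, since $\sup_{p>n}\eth(\xi_n,\xi_p)\to 0$ is exactly the Cauchy condition. Hence completeness transfers.

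\textbf{Step 2: Kannan implies Bianchini.} Condition (i) is literally the same in Definitions \ref{ksf definition} and \ref{bsf definition}. For condition (ii), write
\begin{align*}
a&=\eth(\xi,S(\xi,\Upsilon\xi))+\eth(\Upsilon\xi,S(\xi,\Upsilon\xi)),\\
b&=\eth(\zeta,S(\zeta,\Upsilon\zeta))+\eth(\Upsilon\zeta,S(\zeta,\Upsilon\zeta)).
\end{align*}
We have $\frac{a+b}{2}\le\max\{a,b\}$, and $F$ is increasing by (W1). Therefore \eqref{ksf contraction} implies \eqref{bsf contraction} with the same $\omega$, whenever the arguments of $F$ are positive.

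\textbf{Main obstacle.} The delicate point is that $F$ is only defined on $(0,\infty)$. When the left-hand argument vanishes, the inequality must be read as in the theorem, where it is only invoked for nonzero arguments. Condition (i) guarantees that if the left side is nonzero then $a+b>0$, hence $\max\{a,b\}>0$, so both sides are meaningful. I would add a sentence noting that the inequality is understood exactly on pairs with nonzero left-hand side, as in Definition \ref{sf definition}. The distinctness requirement on the sequences in Axiom 3 is a minor technicality that I would dispatch by the remark that metric spaces are super-metric, as cited in \cite{super metric}.
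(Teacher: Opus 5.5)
Your proposal is correct and follows the paper's own proof, which is the one-line observation that $\frac{a+b}{2}\le\max\{a,b\}$, together with the monotonicity (W1) of $F$, makes every Kannan $S^F$-contraction a Bianchini $S^F$-contraction, after which Theorem \ref{bsf theorem} applies. Your extra checks make explicit what the paper leaves implicit: that a complete metric space is a complete super-metric space with $s=1$, and that condition (i) keeps the arguments of $F$ positive on the pairs where the inequality is used.
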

\begin{proof}
    Using the inequality $\frac{a+b}{2}\le \max\{a,b\}$ in Theorem \ref{bsf theorem}.
\end{proof}

    \begin{corollary}\label{bsfcol1}\cite{bessem}
         Let $(W,\eth)$ be a complete metric space and $S:W\times W \to W$ be a given mapping  with $S(\xi,\xi)=\xi$. 
If $\Upsilon  : W \to W$ is an \textit{$S^K$-contraction}, then $\Upsilon $ is a \textit{Picard operator} provided that, for all $u,v \in W$,
\[
\lim_{n \to \infty} \eth(\Upsilon^n u, v) = 0 
\quad \Longrightarrow \quad 
\lim_{n \to \infty} \eth(\Upsilon(\Upsilon^n u), \Upsilon  v) = 0.
\]
\end{corollary}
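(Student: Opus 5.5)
The plan is to reduce Corollary \ref{bsfcol1} to Corollary \ref{bsfcol11}, and hence to Theorem \ref{bsf theorem}. Every metric space is a super-metric space with $s=1$: given $\zeta$, take the sequences $\xi_n=\zeta_n$ suitably (the triangle inequality gives axiom 3 in the form used throughout the paper). It therefore suffices to show that an \textit{$S^K$-contraction} is a \textit{Kannan $S^F$-contraction} for $F(t)=\ln t\in\mathcal F$.

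Fix $\beta\in(0,1)$ from \eqref{sk contraction} and abbreviate
\[
L=\eth(\Upsilon \xi,S(\Upsilon \xi,\Upsilon \zeta))+\eth(\Upsilon \zeta,S(\Upsilon \xi,\Upsilon \zeta))
\]
and
\[
M=\eth(\xi,S(\xi,\Upsilon \xi))+\eth(\Upsilon \xi,S(\xi,\Upsilon \xi))+\eth(\zeta,S(\zeta,\Upsilon \zeta))+\eth(\Upsilon \zeta,S(\zeta,\Upsilon \zeta)).
\]
Then \eqref{sk contraction} reads $L\le \frac{\beta}{2}M$.

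We first check condition (i) of Definition \ref{ksf definition}. If $L\neq 0$, then $M\ge \frac{2}{\beta}L>0$, so at least one of the two bracketed sums making up $M$ is nonzero, which is exactly (i). For condition (ii), whenever $L>0$ we have $M>0$. Taking logarithms of $L\le\beta\cdot\frac{M}{2}$ gives
\[
-\ln\beta+\ln L\le \ln\left(\tfrac{M}{2}\right),
\]
which is \eqref{ksf contraction} with $\omega=-\ln\beta>0$ and $F=\ln$.

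With (i) and (ii) verified, Corollary \ref{bsfcol11} applies, with the same auxiliary condition on $S$ ($S(\xi,\xi)=\xi$) and the same limit hypothesis. It yields that $\Upsilon$ is a Picard operator. Equivalently, one can pass through Theorem \ref{bsf theorem} directly, since $\frac{M}{2}\le\max\{\cdot,\cdot\}$ of the two summands and $F$ is increasing.

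The main subtlety is the convention in \eqref{ksf contraction}: it is stated "for all $\xi,\zeta$" but only makes sense when $L>0$, since $F$ is defined on $(0,\infty)$. I would interpret it, as in Definition \ref{sf definition}, as required only when $L\neq0$. Under that reading the case $L=0$ needs no check, and the logarithmic conversion above is the whole argument.
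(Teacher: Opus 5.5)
Your proposal is correct and coincides with the paper's proof, which simply sets $F=\ln$ in Corollary~\ref{bsfcol11}; your choice $\omega=-\ln\beta$, your explicit check of condition (i), and your remark that \eqref{ksf contraction} is only needed when the left-hand sum is nonzero spell out what the paper leaves implicit. Be aware, however, that this route ultimately rests on Lemma~\ref{convergent lemma}, whose claim that asymptotic regularity alone makes the orbit Cauchy is false in general (e.g.\ $\Upsilon\xi=\sqrt{\xi^2+1}$ on $[0,\infty)$), so for a fully rigorous proof in this metric setting you can bypass it: with $\xi_n=\Upsilon^n\xi_0$ and $a_n=\eth(\xi_n,S(\xi_n,\xi_{n+1}))+\eth(\xi_{n+1},S(\xi_n,\xi_{n+1}))$, inequality \eqref{sk contraction} at $(\xi,\zeta)=(\xi_n,\xi_{n+1})$ gives $a_{n+1}\le\frac{\beta}{2-\beta}\,a_n$, while $\eth(\xi_n,\xi_{n+1})\le a_n$ by the triangle inequality, so the orbit is Cauchy by geometric decay; the fixed point then follows from the limit hypothesis, and uniqueness follows because $S(\xi,\xi)=\xi$ makes the right side of \eqref{sk contraction} vanish at two fixed points.
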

\begin{proof}
     Setting $F(\xi)=\ln \xi $ in corollary \ref{bsfcol11}.
\end{proof}
   \begin{corollary}\label{bsfcol2}
         Let $(W,\eth,s)$ be a complete super-metric space and $S:W\times W \to W$ be a given mapping  with $S(\xi,\xi)=\xi$. 
If $\Upsilon  : W \to W$ is a continuous \textit{Bianchini $S^F$-contraction}, then $\Upsilon $ is a \textit{Picard operator}.
\end{corollary}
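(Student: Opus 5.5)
The plan is to derive Corollary \ref{bsfcol2} directly from Theorem \ref{bsf theorem}, in the same way Corollary \ref{sfcol2} was derived from Theorem \ref{sf theorem}. All hypotheses of Theorem \ref{bsf theorem} are already assumed: $(W,\eth,s)$ is a complete super-metric space, $S(\xi,\xi)=\xi$, and $\Upsilon$ is a Bianchini $S^F$-contraction. So the only thing left to verify is the extra condition
\[
\lim_{n \to \infty} \eth(\Upsilon^n u, v) = 0 \quad \Longrightarrow \quad \lim_{n \to \infty} \eth(\Upsilon(\Upsilon^n u), \Upsilon v) = 0
\]
for all $u,v\in W$. I will show that it follows from the continuity of $\Upsilon$.

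The key step is to fix what ``continuous'' means in the super-metric setting. Convergence in $(W,\eth,s)$ has been defined as $\eth(\xi_n,\xi)\to 0$, so I will take continuity of $\Upsilon$ to be \emph{sequential} continuity: whenever $\eth(\xi_n,\xi)\to 0$, we have $\eth(\Upsilon\xi_n,\Upsilon\xi)\to 0$. With this convention, fix $u,v\in W$ with $\eth(\Upsilon^n u,v)\to 0$, and apply sequential continuity to the sequence $\xi_n=\Upsilon^n u$ with limit $\xi=v$. This gives $\eth(\Upsilon^{n+1}u,\Upsilon v)\to 0$, which is exactly the required implication.

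With the condition established, Theorem \ref{bsf theorem} applies and yields that $\Upsilon$ has a unique fixed point $\xi^*$ to which every Picard sequence converges. That is, $\Upsilon$ is a Picard operator.

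The only potential obstacle is the meaning of continuity. A super-metric need not induce a nice topology, because the triangle inequality is weakened. If continuity were understood topologically rather than sequentially, one would have to argue that it implies sequential continuity with respect to $\eth$-convergence, and that implication may fail in general. I would avoid this issue by stating explicitly that continuity is meant sequentially with respect to $\eth$-convergence. This is the same interpretation used implicitly in the proof of Corollary \ref{sfcol2}, and under it the argument is immediate.
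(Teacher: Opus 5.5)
Your proposal is correct and follows the paper's proof: the paper also just notes that continuity of $\Upsilon$ gives the orbital condition $\lim_{n\to\infty}\eth(\Upsilon^n u,v)=0 \Rightarrow \lim_{n\to\infty}\eth(\Upsilon(\Upsilon^n u),\Upsilon v)=0$ and then invokes Theorem \ref{bsf theorem}. Reading continuity as sequential continuity with respect to $\eth$-convergence makes explicit what the paper leaves unstated, since it never defines continuity in a super-metric space.
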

\begin{proof}
    Since continuity of $\Upsilon $ implies condition “ for all $u,v \in W$, $\lim_{n \to \infty} \eth(\Upsilon^n u, v) = 0 
\quad \Longrightarrow \quad 
\lim_{n \to \infty} \eth(\Upsilon(\Upsilon^n u), \Upsilon  v) = 0$
" of Theorem \ref{bsf theorem}.
\end{proof}
\begin{corollary}\cite{bianchini}
     Let $(W,\eth)$ be a complete metric space. If $\Upsilon :W\to W$ is a Bianchini-contraction, then $\Upsilon $ has a unique fixed point.
\end{corollary}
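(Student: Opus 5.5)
The plan is to deduce this corollary from Corollary \ref{bsfcol2}, by choosing a specific auxiliary map $S$ and a specific $F$. Take $S(\xi,\zeta)=\xi$ for all $\xi,\zeta\in W$; this clearly satisfies $S(\xi,\xi)=\xi$. Take $F(t)=\ln t$, which belongs to $\mathcal{F}$. Recall that a Bianchini contraction satisfies $\eth(\Upsilon\xi,\Upsilon\zeta)\le\beta\max\{\eth(\xi,\Upsilon\xi),\eth(\zeta,\Upsilon\zeta)\}$ with $\beta\in(0,1)$. Every complete metric space is a complete super-metric space with $s=1$: given $\zeta$, take constant sequences $\xi_n=\zeta_n$, or the trivial choice the triangle inequality allows.

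\textbf{Step 1: translate the quantities.} With $S(\xi,\zeta)=\xi$:
\begin{itemize}
\item the left side becomes $\eth(\Upsilon\xi,\Upsilon\xi)+\eth(\Upsilon\zeta,\Upsilon\xi)=\eth(\Upsilon\xi,\Upsilon\zeta)$;
\item $\eth(\xi,S(\xi,\Upsilon\xi))+\eth(\Upsilon\xi,S(\xi,\Upsilon\xi))=\eth(\Upsilon\xi,\xi)$;
\item similarly, the $\zeta$-term equals $\eth(\zeta,\Upsilon\zeta)$.
\end{itemize}
So condition (ii) of Definition \ref{bsf definition}, with $F=\ln$ and $\omega=-\ln\beta>0$, is exactly the Bianchini inequality, whenever $\eth(\Upsilon\xi,\Upsilon\zeta)>0$.

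\textbf{Step 2: verify condition (i).} Condition (i) says that if $\Upsilon\xi\ne\Upsilon\zeta$, then $\xi\ne\Upsilon\xi$ or $\zeta\ne\Upsilon\zeta$. This is immediate: if both $\xi$ and $\zeta$ were fixed, the Bianchini inequality would give $\eth(\Upsilon\xi,\Upsilon\zeta)\le 0$.

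\textbf{Step 3: remove the continuity hypothesis.} This is the main obstacle. Corollary \ref{bsfcol2} needs continuity, but a Bianchini contraction need not be continuous. So I would instead appeal directly to Theorem \ref{bsf theorem}, which requires only the limit condition. To get existence, use Lemma \ref{asym lemma2} and Lemma \ref{convergent lemma} to obtain $\xi_n\to\xi^*$. Then verify $\Upsilon\xi^*=\xi^*$ with the metric triangle inequality and the contraction:
\[
\eth(\xi^*,\Upsilon\xi^*)\le \eth(\xi^*,\xi_{n+1})+\beta\max\{\eth(\xi_n,\xi_{n+1}),\eth(\xi^*,\Upsilon\xi^*)\}.
\]
Consider the two cases of the maximum.
\begin{itemize}
\item If the maximum is $\eth(\xi^*,\Upsilon\xi^*)$, then $(1-\beta)\eth(\xi^*,\Upsilon\xi^*)\le\eth(\xi^*,\xi_{n+1})\to0$.
\item Otherwise the bound tends to $0$ directly, by asymptotic regularity.
\end{itemize}
Either way $\Upsilon\xi^*=\xi^*$, which bypasses the limit hypothesis.

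\textbf{Step 4: uniqueness.} Uniqueness follows as in the proof of Theorem \ref{bsf theorem}. For two fixed points $u\ne v$, the right side is $\beta\max\{0,0\}=0$, which contradicts $\eth(u,v)>0$.
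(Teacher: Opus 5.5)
Your translation of the hypotheses (Steps 1, 2 and 4) is correct, and so is the fixed-point verification in Step 3. Your route also genuinely differs from the paper's. The paper's proof just specializes Corollary~\ref{bsfcol2} with $S(\xi,\zeta)=\xi$ and $F=\ln$, and that corollary silently requires $\Upsilon$ to be continuous. A Bianchini contraction need not be continuous; every Kannan contraction is one. Your estimate $(1-\beta)\eth(\xi^*,\Upsilon\xi^*)\le\eth(\xi^*,\xi_{n+1})+\beta\,\eth(\xi_n,\xi_{n+1})$ is exactly what removes that hidden assumption.

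The step that does not hold up is how you obtain $\xi_n\to\xi^*$. You cite Lemma~\ref{convergent lemma}, but that lemma is false: asymptotic regularity does not make orbits Cauchy. On $[0,\infty)$ with the usual metric, $\Upsilon\xi=\sqrt{\xi^2+1}$ gives $\Upsilon^n\xi=\sqrt{\xi^2+n}$. Then $\eth(\Upsilon^n\xi,\Upsilon^{n+1}\xi)\to0$, yet no orbit converges. The lemma's inductive argument yields at best $\eth(\xi_n,\xi_{n+k})\to0$ for each fixed $k$, which is weaker than the Cauchy property. The paper's own chain (Theorem~\ref{bsf theorem}, then Corollary~\ref{bsfcol2}) inherits the same defect.

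In your setting the fix is short and uses only the contraction. Take $(\xi,\zeta)=(\xi_n,\xi_{n+1})$ in the Bianchini inequality. This gives $\eth(\xi_{n+1},\xi_{n+2})\le\beta\,\eth(\xi_n,\xi_{n+1})$, because if the maximum were $\eth(\xi_{n+1},\xi_{n+2})$ that distance would have to be $0$. Hence $\eth(\xi_n,\xi_{n+1})\le\beta^n\eth(\xi_0,\xi_1)$. Then for $m>n\ge1$,
\[
\eth(\xi_n,\xi_m)=\eth(\Upsilon\xi_{n-1},\Upsilon\xi_{m-1})\le\beta\max\{\eth(\xi_{n-1},\xi_n),\eth(\xi_{m-1},\xi_m)\}\le\beta^{n}\eth(\xi_0,\xi_1),
\]
so $\{\xi_n\}$ is Cauchy and converges in the complete metric space $W$. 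With this replacement your argument is complete and works entirely in the metric setting. Neither Lemma~\ref{asym lemma2} nor the super-metric framework is then needed.
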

\begin{proof}
    Setting $S(\xi,\zeta)=\xi$ or $S(\xi,\zeta)=\zeta $ and $F(\xi)=\ln \xi $ in Corollary \ref{bsfcol2}.
\end{proof}
\begin{corollary}\cite{batra}
     Let $(W,\eth)$ be a complete metric space. If $\Upsilon :W\to W$ is a \textit{Kannan $F$-contraction}, then $\Upsilon $ has a unique fixed point.
\end{corollary}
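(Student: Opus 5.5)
We derive this from Corollary \ref{bsfcol11} (equivalently Corollary \ref{bsfcol2}, using that a Kannan $F$-contraction is continuous at its fixed point behaviour is not needed), by specializing the auxiliary map to $S(\xi,\zeta)=\xi$. This choice satisfies $S(\xi,\xi)=\xi$. With it, the left-hand quantity becomes
\[
\eth(\Upsilon\xi,S(\Upsilon\xi,\Upsilon\zeta))+\eth(\Upsilon\zeta,S(\Upsilon\xi,\Upsilon\zeta))=\eth(\Upsilon\zeta,\Upsilon\xi).
\]
The bracketed terms on the right become
\[
\eth(\xi,S(\xi,\Upsilon\xi))+\eth(\Upsilon\xi,S(\xi,\Upsilon\xi))=\eth(\Upsilon\xi,\xi),
\]
and similarly for $\zeta$. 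Hence inequality \eqref{ksf contraction} turns into exactly condition (ii) of the Kannan $F$-contraction of Batra et al. Likewise, condition (i) of Definition \ref{ksf definition} becomes: $\Upsilon\xi\neq\Upsilon\zeta$ implies $\Upsilon\xi\neq\xi$ or $\Upsilon\zeta\neq\zeta$. This is condition (i) of a Kannan $F$-contraction.

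One subtlety needs care. The Kannan $F$-contraction inequality is only required when $\Upsilon\xi\neq\Upsilon\zeta$, whereas \eqref{ksf contraction} is written for all $\xi,\zeta$. However, when $\Upsilon\xi=\Upsilon\zeta$ the left side involves $F(0)$, which is undefined. So \eqref{ksf contraction} must implicitly be read only when the left argument is nonzero, as in Definition \ref{sf definition}, and the two conditions then coincide. A metric space is a super-metric space with $s=1$: take $\xi_n=\zeta$ perturbed trivially, or simply note the paper's statement that every metric space satisfies the axioms.

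It remains to supply the continuity-type hypothesis of Corollary \ref{bsfcol11}. Suppose $\eth(\Upsilon^n u,v)\to0$; we must show $\eth(\Upsilon^{n+1}u,\Upsilon v)\to0$. First, by Lemma \ref{asym lemma2}, $\eth(\Upsilon^n u,\Upsilon^{n+1}u)\to0$, so $\Upsilon^{n+1}u\to v$ by the triangle inequality. Next, for the indices with $\Upsilon^{n+1}u\neq\Upsilon v$, condition (ii) together with (W1) gives
\[
\eth(\Upsilon^{n+1}u,\Upsilon v)<\tfrac12\big(\eth(\Upsilon^n u,\Upsilon^{n+1}u)+\eth(v,\Upsilon v)\big).
\]
Letting $n\to\infty$, and using $\eth(\Upsilon^{n+1}u,\Upsilon v)\to\eth(v,\Upsilon v)$ along those indices by the triangle inequality, we get $\eth(v,\Upsilon v)\le\tfrac12\eth(v,\Upsilon v)$. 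Hence $\Upsilon v=v$, and then $\eth(\Upsilon^{n+1}u,\Upsilon v)=\eth(\Upsilon^{n+1}u,v)\to0$. If instead only finitely many indices satisfy $\Upsilon^{n+1}u\neq\Upsilon v$, the limit is trivially $0$.

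Corollary \ref{bsfcol11} then yields that $\Upsilon$ is a Picard operator, and in particular has a unique fixed point. The main obstacle is this verification of the limit hypothesis, since Kannan-type maps need not be continuous. The argument above, which reuses the contraction inequality itself, is what I expect handles it.
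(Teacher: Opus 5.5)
Your proposal follows the paper's route, specializing $S(\xi,\zeta)=\xi$ in Corollary~\ref{bsfcol11}, and it is more complete than the paper's one-line proof, which never checks that corollary's hypothesis $\lim_{n}\eth(\Upsilon^n u,v)=0\Rightarrow\lim_{n}\eth(\Upsilon^{n+1}u,\Upsilon v)=0$. Your argument that any orbit limit $v$ satisfies $\eth(v,\Upsilon v)\le\frac12\eth(v,\Upsilon v)$ correctly supplies that hypothesis, and there $\eth(\Upsilon^n u,\Upsilon^{n+1}u)\to0$ already follows from $\Upsilon^n u\to v$ by the triangle inequality, so Lemma~\ref{asym lemma2} is not needed; you should also delete the garbled asides about Corollary~\ref{bsfcol2} (a Kannan $F$-contraction need not be continuous, so it does not apply) and about the super-metric axioms (Corollary~\ref{bsfcol11} is already stated for metric spaces).
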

\begin{proof}
    Setting $S(\xi,\zeta)=\xi$ or $S(\xi,\zeta)=\zeta $ in Corollary \ref{bsfcol11}.
\end{proof}


\section{Application}
 Inspired by \cite{ieee}, we apply our main result to aircraft terrain following.

Consider an aircraft flying over a terrain profile. For modeling purposes, we restrict ourselves to a two–dimensional framework involving the horizontal distance and the altitude; thus the motion is described in the pitch plane of the aircraft. The terrain profile is detected using radar measurements, while the aircraft is equipped with a radar altimeter to measure its height above the ground. 

In practice, the aircraft is subject to several operational constraints. In particular, limitations on acceleration can be interpreted as restrictions on the radius of curvature of the flight trajectory when the speed is fixed. Moreover, there may also exist an upper bound on the inclination angle of the flight relative to the horizontal axis. Taking these constraints together with a prescribed ground clearance, one can determine a desired flight trajectory denoted by $\gamma(\xi)$, where $\xi$ represents the horizontal distance.

The problem is to generate automatically a flight path that satisfies the above requirements. To describe the system, we introduce the following functions:

\begin{itemize}
\item $\gamma(\xi)$: the desired trajectory specifying the required altitude above the terrain at each horizontal position $\xi$;
\item $\varkappa(\xi)$: the control input associated with the flight control system, for instance the elevator control signal;
\item $\sigma(\xi)$: the trajectory determined by the control function $\varkappa(\xi)$, which may represent the flight angle along the path;
\item $\varpi(\xi)$: the actual altitude of the aircraft during flight.
\end{itemize}

The control function $\varkappa(\xi)$ is updated iteratively through a feedback mechanism. Starting from an initial control $\varkappa_0(\xi)$, the sequence $\{\varkappa_n(\xi)\}$ is generated so that the resulting trajectory approaches the required path $\gamma(\xi)$. The updating scheme is given by
\[
\varkappa_{n+1}(\xi)=\mu_n(\xi)
= -\gamma(\xi)+\varkappa_n(\xi)+\varpi_n(\xi),
\]
where $\varpi_n(\xi)$ denotes the altitude obtained at the $n$th iteration.

The altitude $\varpi_n(\xi)$ depends on the control input $\varkappa_n(\xi)$ according to
\begin{equation}\label{aa1}
\varpi_n(\xi)=\mathcal{G}K\,\varkappa_n(\xi).
\end{equation}
Substituting this relation into the recursive formula yields
\begin{equation}\label{aa2}
\varkappa_{n+1}(\xi)
= -\gamma(\xi)+(\mathcal{G}K+I)\varkappa_n(\xi),
\end{equation}
where $\mathcal{G}$ is a scaling parameter and $K$ represents the dynamic characteristics of the aircraft.

For the convergence of the iterative process, it is sufficient that the operator on the right–hand side of the above equation defines an $S^F$-contraction. Under this assumption, the iteration converges to a stable control function for which the actual trajectory coincides with the prescribed one.

Consequently, the solution of the system satisfies
\[
\varkappa(\xi)= -\gamma(\xi)+(\mathcal{G}K+I)\varkappa(\xi).
\]
Rearranging the above relation, we obtain
\[
\gamma(\xi)=\mathcal{G}K\,\varkappa(\xi)=\varpi(\xi).
\]
Hence, the obtained control $\varkappa(\xi)$ ensures that the realized altitude $\varpi(\xi)$ coincides with the required trajectory $\gamma(\xi)$.

Assume now that the aircraft moves with a constant speed $S$. Then the altitude $\varpi(\xi)$ relative to a reference level can be written as
\[
\varpi(\xi)=\varpi(\xi_0)+S\int_{\xi_0}^{\xi}\tan(\sigma)\,d\xi,
\]
where $\sigma(\xi)$ denotes the flight angle and $\xi_0$ is the initial horizontal position. For simplicity, we assume that
\begin{equation}\label{aa3}
\varpi(\xi)=\mathcal{G}\sigma(\xi).
\end{equation}
This relation enables the description of the aircraft dynamics in terms of the flight angle $\sigma(\xi)$.

The dynamics of the aircraft are governed by the following third–order differential equation:
\begin{equation}\label{aa4}
\frac{d^3\sigma}{dt^3}
+a_2\frac{d^2\sigma}{dt^2}
+a_1\frac{d\sigma}{dt}
=
b_2\frac{d^2\varkappa}{dt^2}
+b_1\frac{d\varkappa}{dt}
-b_0\varkappa.
\end{equation}

Using Laplace transform techniques, the solution can be expressed in the form
\[
f[\xi(t)]
=
b_2\frac{d^2\varkappa}{dt^2}
+
b_1\frac{d\varkappa}{dt}
-
b_0\varkappa.
\]
Since the horizontal distance $\xi$ depends on time $t$ for a given speed, both $\sigma$ and $\varkappa$ may be regarded as functions of either $t$ or $\xi$.

From Eq.~\eqref{aa4}, the solution for $\sigma(\xi)$ can be written as
\[
\sigma[\xi(t)]
=
\sigma(\xi_0)
+
\int_{t_0}^{t} e^{\beta t_1}
\int_{t_0}^{t_1} e^{-\beta t_2}e^{\alpha t_2}
\int_{t_0}^{t_3} e^{-\alpha t_3}
f[\xi(t_3)]\,dt_3\,dt_2\,dt_1,
\]
which ultimately reduces to
\[
\sigma(\xi(t))=K\,\varkappa(\xi),
\]
where $K$ and $\mathcal{G}$ are constants determined by the dynamic properties of the system.

The block diagram of the system is given in Fig. \ref{block diagram}
\begin{figure}[h!]
\centering
\begin{tikzpicture}[>=latex, thick, scale=0.66, transform shape]

\tikzstyle{sum}=[draw, circle, minimum size=5mm, fill=orange!50]
\tikzstyle{sum2}=[draw, circle, minimum size=5mm, fill=green!40]
\tikzstyle{blockC}=[draw, rectangle, minimum width=1.3cm, minimum height=0.9cm, fill=blue!40]
\tikzstyle{blockA}=[draw, rectangle, minimum width=1.3cm, minimum height=0.9cm, fill=red!50]
\tikzstyle{blockH}=[draw, rectangle, minimum width=1.3cm, minimum height=0.9cm, fill=purple!60]

\node (input) at (0,0) {$\Upsilon(\xi)$};
\node[sum] (sum1) at (2.2,0) {};
\node[blockC] (C) at (4,0) {$C$};
\node[blockA] (K) at (9.9,0) {Airplane dynamics $K$};
\node[blockH] (H) at (14.6,0) {$\mathcal{G}$};
\node (output) at (17.5,0) {actual position $\varpi(\xi)$};

\node[sum2] (sum2node) at (9.8,-3) {};

\draw ($(sum1.center)+(-5pt,6pt)$) -- ($(sum1.center)+(5pt,-6pt)$);
\draw ($(sum1.center)+(-5pt,-6pt)$) -- ($(sum1.center)+(5pt,6pt)$);

\draw ($(sum2node.center)+(-5pt,6pt)$) -- ($(sum2node.center)+(5pt,-6pt)$);
\draw ($(sum2node.center)+(-5pt,-6pt)$) -- ($(sum2node.center)+(5pt,6pt)$);

\node at ([xshift=-4pt,yshift=4pt]sum1.north west) {\small $+$};
\node at ([xshift=-4pt,yshift=-4pt]sum1.south west) {\small $-$};

\node at ([xshift=-4pt,yshift=4pt]sum2node.north west) {\small $+$};
\node at ([xshift=-4pt,yshift=-4pt]sum2node.south west) {\small $-$};

\draw[->] (input) -- (sum1);
\draw[->] (sum1) -- node[above] {$\mu(\xi)$} (C);
\draw[->] (C) -- node[above] {control function $\varkappa(\xi)$} (K);
\draw[->] (K) -- node[above] {output $\gamma(\xi)$} (H);
\draw[->] (H) -- (output);

\draw[->] (output) |- (sum2node);
\draw[->] (sum2node) -| node[left] {$\varpi(\xi)+\varkappa(\xi)$} (sum1);

\end{tikzpicture}

\caption{Block diagram of the automatic flight-path generator}
\label{block diagram}
\end{figure}
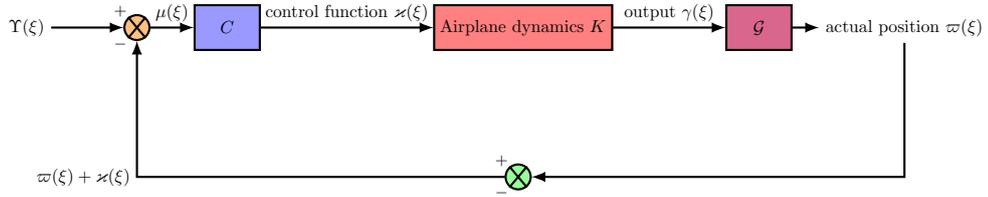
\begin{theorem}\label{appth}
     Suppose that there exists $\omega  ,\delta,a>0$ and $p\ge 1$ and a sequence $\Delta_n=|\varkappa_{n}-\varkappa_{n-1}-a|^p$ such that the conditions given below are satisfied.

\begin{itemize}
\item[(F$_1$)] $(\xi - \xi_0)S\delta \le 1 + e^{\frac{1}{p}(\Delta_n-\Delta_{n+1}-\omega  )},$
\item[(F$_2$)] $\dfrac{\mathcal{G}K\varkappa_{n}(\xi) - \mathcal{G}K\varkappa_{n-1}(\xi)}{\varkappa_n - \varkappa_{n-1}-a} < -1 + e^{\frac{1}{p}(\Delta_n-\Delta_{n+1}-\omega  )}.$
\end{itemize}

Then, \eqref{aa2} converges.
\end{theorem}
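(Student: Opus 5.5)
The plan is to recast the iteration \eqref{aa2} as a Picard iteration $\varkappa_{n+1}=\Upsilon\varkappa_n$ with $\Upsilon\varkappa=-\gamma+(\mathcal{G}K+I)\varkappa$, acting on a space $W$ of continuous control functions on $[\xi_0,\xi_1]$. We equip $W$ with a complete super-metric. The natural choice is the weighted sup-type distance $\eth(\varkappa,\vartheta)=\sup_{\xi}|\varkappa(\xi)-\vartheta(\xi)|^p e^{-\delta(\xi-\xi_0)}$, or simply $|\cdot|^p$ pointwise, which is a super-metric with $s=2^{p-1}$. We then aim to show that $\Upsilon$ is an $S^F$-contraction with $F(t)=\ln t$, since $F$ is then in $\mathcal{F}$ and the exponentials in (F$_1$), (F$_2$) suggest exactly this. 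Theorem~\ref{sf theorem} (or Corollary~\ref{sfcol2}, because $\Upsilon$ is affine and hence continuous) will then give convergence of $\{\varkappa_n\}$ to the unique fixed point, which is the convergence of \eqref{aa2}.

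The auxiliary map will be chosen to produce $\Delta_n$. Take $S(\varkappa,\vartheta)=\vartheta+a$, a constant shift, so that $\eth(\varkappa,S(\varkappa,\vartheta))=|\varkappa-\vartheta-a|^p$ and the second term $\eth(\vartheta,S(\varkappa,\vartheta))=a^p$ is a constant. Along the orbit, with $(\varkappa,\vartheta)=(\varkappa_n,\varkappa_{n-1})$, the left-hand quantity in \eqref{sf contraction} becomes $\Delta_{n+1}+a^p$ and the right-hand quantity becomes $\Delta_n+a^p$. Next we compute
\[
\Upsilon\varkappa_n-\Upsilon\varkappa_{n-1}-a=(\varkappa_n-\varkappa_{n-1}-a)+\bigl(\mathcal{G}K\varkappa_n-\mathcal{G}K\varkappa_{n-1}\bigr).
\]
Dividing by $\varkappa_n-\varkappa_{n-1}-a$ gives the factor $1+\frac{\mathcal{G}K\varkappa_n-\mathcal{G}K\varkappa_{n-1}}{\varkappa_n-\varkappa_{n-1}-a}$. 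By (F$_2$) this factor is less than $e^{\frac1p(\Delta_n-\Delta_{n+1}-\omega)}$. Raising to the power $p$ yields $\Delta_{n+1}\le e^{\Delta_n-\Delta_{n+1}-\omega}\Delta_n$. The integral term from the altitude representation, with the weight $e^{-\delta(\xi-\xi_0)}$ and length factor $(\xi-\xi_0)S\delta$, is absorbed using (F$_1$), which bounds it by the same exponential. Taking logarithms should give $\omega+\ln(\Delta_{n+1}+\cdots)\le\ln(\Delta_n+\cdots)$, that is, \eqref{sf contraction}, in the form of Remark~\ref{remark1}(2) or (1).

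The main obstacle is that (F$_1$)--(F$_2$) only control the inequality along the iterates, not for all pairs $\varkappa,\vartheta$ as Definition~\ref{sf definition} requires. I would therefore not invoke Theorem~\ref{sf theorem} verbatim. Instead I would reproduce its proof, which only uses the inequality at $(\xi_n,\xi_{n-1})$ to obtain Lemma~\ref{asym lemma}, and then apply Lemma~\ref{convergent lemma}. Completeness of $W$ yields a limit $\varkappa^*$, and continuity of the affine $\Upsilon$ makes $\varkappa^*$ a fixed point, i.e.\ $\gamma=\mathcal{G}K\varkappa^*$. A second delicate point is the constant $a^p$ term, which prevents the sum from tending to $0$. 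I would handle this by working directly with $\Delta_n$, using $F(\Delta_{n+1})\le F(\Delta_n)-\omega$ and (W2) to get $\Delta_n\to0$, so that $\varkappa_n-\varkappa_{n-1}\to a$. If this forces $a$ to be effectively zero, I would interpret the convergence as convergence of the increments in the super-metric.
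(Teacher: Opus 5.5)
You follow essentially the paper's route: the same auxiliary map $S(\varkappa,\vartheta)=\vartheta+a$, the same factorisation of $\varkappa_{n+1}-\varkappa_n-a$ through $Q_n=\frac{\mathcal{G}K\varkappa_n-\mathcal{G}K\varkappa_{n-1}}{\varkappa_n-\varkappa_{n-1}-a}$, and the same target, Theorem~\ref{sf theorem}. But your step ``completeness of $W$ yields a limit $\varkappa^*$'' cannot be carried out, and the $a^p$ issue you flag is fatal, not technical.

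Along the orbit the second summand is $\eth(\varkappa_n,S(\varkappa_{n+1},\varkappa_n))=a^p$ for every $n$. So the inequality $\omega+F(\lambda_n+a^p)\le F(\lambda_{n-1}+a^p)$, with $\lambda_n=\eth(\varkappa_{n+1},\varkappa_n+a)$, which Lemma~\ref{asym lemma} needs, cannot hold for all $n$ and any $F\in\mathcal{F}$. Iterating it and using (W2) would give $\lambda_n+a^p\to0$, which is impossible.

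Your fallback is the right computation: once $|1+Q_n|<e^{\frac1p(\Delta_n-\Delta_{n+1}-\omega)}$ holds, you get $\Delta_{n+1}e^{\Delta_{n+1}}<e^{-\omega}\Delta_n e^{\Delta_n}$, hence $\Delta_n\to0$. But that means $\varkappa_n-\varkappa_{n-1}\to a\neq0$. Then $\liminf_n\eth(\varkappa_n,\varkappa_{n+1})\ge a^p>0$, which is the opposite of asymptotic regularity, and $\varkappa_n(\xi)\to+\infty$. There is no Cauchy orbit to feed into Lemma~\ref{convergent lemma}, and reading ``converges'' as ``the increments converge'' changes the theorem.

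In fact (F$_2$) alone rules out the conclusion. Suppose $\varkappa_n\to\varkappa^*$ and set $d_n=\varkappa_n-\varkappa_{n-1}$. Then $d_n\to0$, and by \eqref{aa2}, $\mathcal{G}K\varkappa_n-\mathcal{G}K\varkappa_{n-1}=d_{n+1}-d_n\to0$, so $Q_n\to0$. Meanwhile $\Delta_n\to a^p$ makes the right-hand side of (F$_2$) tend to $-1+e^{-\omega/p}<0$, so (F$_2$) fails for large $n$. Thus for $a>0$ the hypotheses force \eqref{aa2} \emph{not} to converge, and the statement can be true only vacuously.

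The paper's own proof does not escape this. Its last step silently replaces $e^{-\omega}\max\Delta_n+a^p$ by the smaller quantity $e^{-\omega}(\max\Delta_n+a^p)$. The resulting inequality is exactly the impossible one above with $F(t)=\ln t+t$. It is also checked only along the orbit, which you correctly note is weaker than Definition~\ref{sf definition} requires.

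Two further points in your sketch would need fixing in any repaired version:
\begin{enumerate}
\item You raise the one-sided bound $1+Q_n<e^{\frac1p(\cdots)}$ from (F$_2$) to the $p$-th power. That needs $1+Q_n>-e^{\frac1p(\cdots)}$ as well. This lower bound is exactly what the paper extracts from (F$_1$), via $d(\tan\sigma)/d\varkappa\ge-\delta$ and $\mathcal{G}K\varkappa_n-\mathcal{G}K\varkappa_{n-1}=S\int_{\xi_0}^{\xi}(\tan\sigma_n-\tan\sigma_{n-1})\,d\xi$. Your description of (F$_1$) as ``absorbing the integral term'' does not correspond to any step of your computation.
\item $F(t)=\ln t$ is the wrong choice. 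The inequality $\Delta_{n+1}<e^{\Delta_n-\Delta_{n+1}-\omega}\Delta_n$ is precisely $\omega+F(\Delta_{n+1})<F(\Delta_n)$ for $F(t)=\ln t+t$ (Remark~\ref{remark1}(2), the paper's choice). With $F=\ln$ you would need $\Delta_{n+1}\le e^{-\omega}\Delta_n$, which does not follow.
\end{enumerate}
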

\begin{proof}
    
 Let us define the metric $\eth(f(\xi), g(\xi)) = \max |f(\xi) - g(\xi)|^p,p\ge1$, which is a complete super-metric space with $s=2^{p-1}$.

If the operator on the right-hand side of \eqref{aa2} is an $S^F$-contraction, then it possesses a unique fixed point $\xi^*$ given by
\[
\xi^* = -\gamma + (\mathcal{G}K + I)\xi^*.
\]

Taking $S(\xi,\zeta)=\zeta+ a, a> 0$, we have
\begin{equation}\label{a1}
\begin{split}
|\varkappa_{n+1}(\xi)-S(\varkappa_{n+1}(\xi),&\varkappa_{n}(\xi))|^p+|\varkappa_{n}(\xi)-S(\varkappa_{n+1}(\xi),\varkappa_{n}(\xi))|^p\\
&= \Delta_{n+1}+a^p\\
&= \left|\varkappa_{n+1}(\xi) - \varkappa_{n}(\xi)-a\right|^p+a^p\\
&= \left|(\mathcal{G}K+I)\varkappa_{n}(\xi) - (\mathcal{G}K+I)\varkappa_{n-1}(\xi)-a\right|^p+a^p  \\
&= \left|\mathcal{G}K\varkappa_n(\xi) - \mathcal{G}K\varkappa_{n-1}(\xi) + \varkappa_n(\xi) - \varkappa_{n-1}(\xi)-a\right|^p+a^p \\
&= \left| \frac{\mathcal{G}K\varkappa_n(\xi) - \mathcal{G}K\varkappa_{n-1}(\xi)}{\varkappa_n(\xi) - \varkappa_{n-1}(\xi)-a} +1 \right|^p 
\left|\varkappa_n(\xi) - \varkappa_{n-1}(\xi)-a\right|^p+a^p. 
\end{split}
\end{equation}

Consider
\[
\frac{\eth(\tan \sigma)}{d\varkappa} = \sec^2\sigma\frac{d\sigma}{d\varkappa} = (1 + \tan^2 \sigma)\frac{d\sigma}{dt}\frac{dt}{d\varkappa}.
\]

This can be rewritten as:
\[
\left(1 + \left(\frac{dh}{d\xi} \right)^2\right)
\left(\frac{d\sigma}{dt}\right)\left(\frac{dt}{d\varkappa}\right).
\]

Hence, the quantity $\frac{dh}{d\xi}$ remains bounded due to the restrictions on the aircraft's acceleration, while $\frac{d\sigma}{dt}$ and $\frac{d\varkappa}{dt}$ are related through Eq.~\eqref{aa4}, representing the governing dynamics of the aircraft.

Now, we will show that \[
\frac{\mathcal{G}K\varkappa_n(\xi) - \mathcal{G}K\varkappa_{n-1}(\xi)}{\varkappa_n - \varkappa_{n-1}-a} > -1 - e^{\frac{1}{p}(\Delta_n-\Delta_{n+1}-\omega  )}.
\]
From condition (F$_1$), we have
\begin{equation}\label{a3}
\begin{split}
 &(\xi - \xi_0)S\delta \le 1 + e^{\frac{1}{p}(\Delta_n-\Delta_{n+1}-\omega  )} \\
&\Rightarrow  -(\xi - \xi_0)S\delta >- 1 - e^{\frac{1}{p}(\Delta_n-\Delta_{n+1}-\omega  )}. 
\end{split}
\end{equation}

Let
\begin{equation*}
\frac{\eth(\tan \sigma)}{d\varkappa} \ge -\delta , 
\end{equation*}
where $\delta$ is a positive constant, which has not been specified yet.

Now, consider the difference between $\tan \sigma_n$ and $\tan (\sigma_{n-1}+a)$:
\[
\tan \sigma_n - \tan (\sigma_{n-1}+a) = \int_{\varkappa_{n-1}+a}^{\varkappa_n} \frac{\eth(\tan \sigma)}{d\varkappa} d\varkappa \ge -\eth(\varkappa_n-\varkappa_{n-1}-a).
\]

Therefore, we have:
\begin{equation}\label{a4}
\frac{S \int_{\xi_0}^{\xi } (\tan \sigma_n - \tan (\sigma_{n-1}+a)) d\xi} {\varkappa_n - \varkappa_{n-1}-a} > - (\xi - \xi_0)S\delta.
\end{equation}

From \eqref{a3} and \eqref{a4},
\begin{equation}\label{a5}
\frac{S \int_{\xi_0}^{\xi } (\tan \sigma_n - \tan (\sigma_{n-1}+a)) d\xi} {\varkappa_n - \varkappa_{n-1}-a}\ge -S\delta(\xi-\xi_0) > - 1 - e^{\frac{1}{p}(\Delta_n-\Delta_{n+1}-\omega  )}. 
\end{equation}

However,
\[
\mathcal{G}K\varkappa_n(\xi) - \mathcal{G}K\varkappa_{n-1}(\xi)
= S \int_{\xi_0}^{\xi } (\tan \sigma_n - \tan \sigma_{n-1}) d\xi \ge S \int_{\xi_0}^{\xi } (\tan \sigma_n - \tan (\sigma_{n-1}+a)) d\xi.
\]

Therefore, \eqref{a5} becomes
\[
\frac{\mathcal{G}K\varkappa_n(\xi) - \mathcal{G}K\varkappa_{n-1}(\xi)}{\varkappa_n - \varkappa_{n-1}-a} >  - 1 - e^{\frac{1}{p}(\Delta_n-\Delta_{n+1}-\omega  )}.
\]

From physical considerations, $\varkappa(\xi)$ is inversely proportional to $\sigma(\xi)$. Hence, as per the assumption $(F_2)$,
\[
\dfrac{\mathcal{G}K\varkappa_{n}(\xi) - \mathcal{G}K\varkappa_{n-1}(\xi)}{\varkappa_n - \varkappa_{n-1}-a} < -1 + e^{\frac{1}{p}(\Delta_n-\Delta_{n+1}-\omega  )}.
\]

Thus from the above two inequalities,
\[
-1 - e^{\frac{1}{p}(\Delta_n-\Delta_{n+1}-\omega  )} < \frac{\mathcal{G}K\varkappa_n(\xi) - \mathcal{G}K\varkappa_{n-1}(\xi)}{\varkappa_n - \varkappa_{n-1}-a} < -1 + e^{\frac{1}{p}(\Delta_n-\Delta_{n+1}-\omega  )}.
\]

which yields that
\[
\left| \frac{\mathcal{G}K\varkappa_n(\xi) - \mathcal{G}K\varkappa_{n-1}(\xi)}{\varkappa_n - \varkappa_{n-1}-a} + 1 \right| < e^{\frac{1}{p}(\Delta_n-\Delta_{n+1}-\omega  )}
\quad \text{for all } \xi.
\]
Raising both sides to the power $p\ge1$, we have
\[
\left| \frac{\mathcal{G}K\varkappa_n(\xi) - \mathcal{G}K\varkappa_{n-1}(\xi)}{\varkappa_n - \varkappa_{n-1}-a} + 1 \right|^p < e^{(\Delta_n-\Delta_{n+1}-\omega  )}
\quad \text{for all } \xi.
\]
Thus, from Eq. \eqref{a1},
\[
\Delta_{n+1}+a^p < e^{(\Delta_n-\Delta_{n+1}-\omega  )} \Delta_n+a^p.
\]
\[\Rightarrow \max_{\xi }\left(\Delta_{n+1}+a^p\right)e^{\max_{\xi }(\Delta_{n+1}-\Delta_n)} < e^{-\omega  } \max_{\xi }\Delta_n+a^p.\]
\[\Rightarrow \frac{\max_{\xi }\Delta_{n+1}+a^p}{\max_{\xi }\Delta_n+a^p}\cdot\frac{e^{\max_{\xi }(\Delta_{n+1})}}{e^{\max_{\xi }\Delta_n}} < e^{-\omega  } .\]

By Lemma \ref{remark1}, this satisfies the conditions of Theorem \ref{sf theorem}, when $F(\xi) = \log \xi+\xi$.

Hence \eqref{aa2} converges if $(F_1)$ and $(F_2)$ are satisfied.
\end{proof}

\section{Conclusion}\label{sec13}

In this paper, we combine the concepts of \(F\)-contraction and \(S^{B}\)-contraction in the setting of super-metric spaces. In particular, we introduce the notions of \(S^{F}\)-contraction and Bianchini \(S^{F}\)-contraction. We show that the class of \(S^{F}\)-contractions properly contains the class of \(S^{B}\)-contractions, while the converse does not hold. Moreover, we prove that every mapping satisfying the \(S^{F}\)-contraction condition in a complete super-metric space is a \textit{Picard operator}.

Furthermore, we establish that the class of \textit{Kannan \(S^{F}\)-contractions} properly contains the class of \textit{Kannan \(F\)-contractions}, but not conversely. In addition, we prove that the class of \textit{Bianchini \(S^{F}\)-contractions} properly contains the class of \ textit{Kannan \(S^{F}\)-contractions}, whereas the reverse inclusion does not hold. Finally, we apply our results to a model describing an airplane capable of automatically following the terrain.

\backmatter

\section*{Declaration}
\begin{itemize}
    \item \textbf{Funding:} The authors did not receive any external funding.
    \item \textbf{Conflict of Interest:} The authors declare that they have no conflict of interest.
    \item\textbf{Ethical approval:} This article does not contain any studies with human participants or animals performed by any of the authors.
    \item \textbf{Data availability: } All necessary data are included into the paper.
    \item \textbf{Authors' contribution: } Irom Shashikanta Singh conducted the analysis and developed the proofs for the main results, taking a leading role in writing the manuscript. Yumnam Mahendra Singh contributed by verifying the proofs and reviewing the manuscript for typographical and formatting accuracy. Both authors have read and approved the final version of the manuscript.
\end{itemize}






\end{document}